\numberwithin{equation}{section}
\def\N{\mathbb{N}} 
\def\Z{\mathbb{Z}}
\def\ZH{\mathbb{Z}H}
\def\ZG{\mathbb{Z}G}
\newtheorem{theorem}{Theorem}[section]
\newtheorem{lemma}[theorem]{Lemma}
\newtheorem{proposition}[theorem]{Proposition}
\newtheorem{corollary}[theorem]{Corollary}
\newtheorem{definition}[theorem]{Definition}
\newtheorem{remark}[theorem]{Remark}
\DeclareMathOperator{\FVol}{\mathsf{FVol}}
\DeclareMathOperator{\Area}{\mathsf{Area}}
\begin{document}

\title[A Subgroup Theorem for Homological Filling Functions]{A Subgroup Theorem for Homological Filling Functions}
\author[R.G.~Hanlon]{Richard Gaelan Hanlon}
\author[E.~Mart\'inez-Pedroza]{Eduardo Mart\'inez-Pedroza}
  \address{Memorial University\\ St. John's, Newfoundland, Canada}
  \email{emartinezped@mun.ca}
  \email{gaelanhanlon@gmail.com}
\subjclass[2000]{20F65, 20F67,  20J05,  57M07}
\keywords{Filling Functions, Isoperimetric Functions, Dehn functions, Hyperbolic Groups, Finiteness Properties}

\maketitle

\begin{abstract} We use algebraic techniques to study homological filling functions of groups and their subgroups. If $G$ is a group admitting a finite $(n+1)$--dimensional $K(G,1)$ and $H \leq G$ is of type $F_{n+1}$, then the $n^{th}$--homological filling function of $H$ is bounded above by that of $G$.  
This contrasts with known examples where such inequality does not hold under weaker conditions on the ambient group $G$ or the subgroup $H$. We include applications to hyperbolic groups and homotopical filling functions.
\end{abstract}

\section{Introduction}\label{Intro}

The $n^{th}$ homological and homotopical filling functions of a space are generalized isoperimetric functions describing the minimal volume required to fill an $n$--cycle or $n$--sphere with an $(n+1)$--chain or $(n+1)$--ball.  
These functions have been widely studied in Riemannian Geometry and Geometric Group Theory; see for example~\cite{AWP, Snowflake, WordProc, Ge99, Gromov83, MP15}.  
In this paper, we study the relation between the $n^{th}$ homological filling functions of a finitely presented group and its subgroups. Our main result provides sufficient conditions for the $n^{th}$-filling function of a subgroup to be bounded from above by the $n^{th}$-filling function of the ambient group. The hypotheses of our theorem are in terms of finiteness properties of the ambient group and the subgroup.  Our result contrasts with known examples illustrating that this relation does not hold under weaker conditions~\cite{NonHyp,  YoungHeisenberg, Young13}.  

\subsection{Statement of Main Result} 

A $K(G,1)$ for a group $G$ is a cell complex $X$ with contractible universal cover $\widetilde X$ and fundamental group isomorphic to $G$. If $G$ admits a $K(G,1)$ with finite $n$-skeleton, then $G$ is said to be of type $F_n$. Such finiteness properties are natural (topological) generalizations of being finitely generated (type $F_1$) and finitely presented (type $F_2$). 

If $X$ is a $K(G,1)$ with finite $(n+1)$--skeleton, then the \emph{$n^{\text{th}}$--homological filling function} of $G$ is an optimal function $FV_G^{n+1} \colon \mathbb N \to \mathbb N$ such that $FV_G^{n+1}(k)$ bounds the minimal volume required 
to fill an $n$--cycle $\gamma$ of $\widetilde X$ of volume at most $k$, 
 with an $(n+1)$--chain $\mu$ of $\widetilde X$ having boundary $\partial (\mu) = \gamma$. See Section~\ref{sec:fvg} for precise definitions.  

It can be shown that the growth rate of $FV_G^{n+1}$ is independent of the choice of $X$ up to an equivalence relation $\sim$, hence $FV_G^{n+1}$ is an invariant of the group $G$, see~\cite{Fl98, Young}. The relation $f \sim g$ between functions is defined as $f\preceq g$ and $g\preceq f$, where
$f \preceq g$  means that there is $C > 0$ such that for all $n \in \mathbb N$, $f(n) \leq C g( Cn + C) + Cn + C$.  Our main result is a generalization of a result of Gersten~\cite[Thm C]{Ge92} to higher dimensions.

\begin{theorem}\label{SubgroupDehn} Let $n \geq 1$. Let $G$ be a group admitting a finite $(n+1)$-dimensional $K(G,1)$ and let $H \leq G$  be a subgroup of type $F_{n+1}$. Then $$FV_{H}^{n+1} \preceq FV_{G}^{n+1}.$$
\end{theorem}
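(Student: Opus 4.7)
The plan is to reduce the problem to algebra on cellular chain complexes viewed as $\ZH$-complexes and then transfer a filling from $P_\bullet := C_\bullet(\widetilde X)$ to $F_\bullet := C_\bullet(\widetilde Y^{(n+1)})$ via a comparison of resolutions. Here $X$ is the given finite $(n+1)$-dimensional $K(G,1)$ and $Y$ is any $K(H,1)$ with finite $(n+1)$-skeleton, which exists because $H$ is of type $F_{n+1}$. The complex $P_\bullet$ is a free $\ZG$-resolution of $\Z$ of length $n+1$; restricting scalars along $\ZH \hookrightarrow \ZG$ and using that $\ZG$ is free as a left $\ZH$-module on a set of representatives for $H \backslash G$, it remains a free $\ZH$-resolution of $\Z$, with $\ZH$-rank possibly infinite in each degree. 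Meanwhile $F_\bullet$ is a partial free $\ZH$-resolution of length $n+1$ with every $F_i$ finitely generated over $\ZH$.

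By the standard comparison theorem for projective resolutions, applied over $\ZH$, there exist $\ZH$-chain maps $\alpha \colon F_\bullet \to P_\bullet$ and $\beta \colon P_\bullet \to F_\bullet$ in degrees $\leq n+1$, each lifting the identity on $\Z$, together with a $\ZH$-chain homotopy $\eta$ satisfying $\beta\alpha - \mathrm{id}_{F_\bullet} = \partial\eta + \eta\partial$ in degrees $\leq n$. Given a cycle $\gamma \in F_n$ with $\|\gamma\|_1 \leq k$, applying $\alpha$ produces a cycle $\alpha(\gamma) \in P_n$, and by the definition of $FV_G^{n+1}$ we obtain a filling $\mu \in P_{n+1}$ with $\partial\mu = \alpha(\gamma)$ and $\|\mu\|_1 \leq FV_G^{n+1}(\|\alpha(\gamma)\|_1)$. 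Setting $\nu := \beta(\mu) - \eta(\gamma)$, a direct computation using the chain-homotopy identity together with $\partial\gamma = 0$ gives $\partial\nu = \gamma$, so $\nu \in F_{n+1}$ fills $\gamma$ with $\|\nu\|_1 \leq \|\beta(\mu)\|_1 + \|\eta(\gamma)\|_1$.

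To convert this into the inequality $FV_H^{n+1} \preceq FV_G^{n+1}$, one must show that $\alpha$, $\beta$, and $\eta$ can be chosen with uniform $\ell^1$-bounds $\|\phi(x)\|_1 \leq C_\phi \|x\|_1$. For $\alpha$ and $\eta$ this is routine, because $F_\bullet$ has finitely many $\ZH$-basis elements in each degree $\leq n+1$: one defines $\alpha$ and $\eta$ on those finite bases and extends $\ZH$-linearly. The main obstacle I expect is the construction of $\beta$ with such a bound. When $[G:H] = \infty$, $P_\bullet$ has infinitely many $\ZH$-basis elements in each degree, indexed by the $H$-orbits of cells of $\widetilde X$, and a naive $\ZH$-equivariant choice of $\beta$ will in general have image norms growing with the coset. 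Achieving a uniform bound requires exploiting the fact that $P_\bullet$ carries a $\ZG$-action on top of its $\ZH$-action with only finitely many $G$-orbits of cells: one builds $\beta$ skeleton by skeleton starting from a fixed finite set of $G$-orbit representatives of cells of $\widetilde X$, then propagates the definition across the $H$-cosets of $G$ in a coherent way so that the chain-map compatibility relations do not force the norms on successive $H$-orbits to grow. At each inductive step, the exactness of $F_\bullet$ in degrees $\leq n$ together with the finite $\ZH$-generation of each $F_i$ is used to close up boundaries within a uniformly bounded set of $\ZH$-basis elements.

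Once a bounded $\beta$ is in hand, combining the three norm estimates yields $\|\nu\|_1 \leq C_\beta\, FV_G^{n+1}(C_\alpha k) + C_\eta k$, which is precisely the relation $FV_H^{n+1} \preceq FV_G^{n+1}$ required by the equivalence defining $\preceq$.
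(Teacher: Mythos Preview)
Your approach follows the pattern of the paper's Theorem~\ref{AlgebraicDef}, which proves invariance of $FV_G^{n+1}$ by comparing two $FP_{n+1}$ resolutions via chain maps and a chain homotopy. The difficulty you correctly isolate is that $P_\bullet$, as a $\ZH$-complex, is \emph{not} finitely generated in any degree when $[G:H]=\infty$, so Lemma~\ref{lem:ProjBounded} does not apply to $\beta$. Your suggested fix---defining $\beta$ on $G$-orbit representatives and ``propagating across $H$-cosets coherently''---is not a proof: $F_\bullet$ carries no $G$-action, so there is no equivariance to propagate along, and the cycles $\beta_{i-1}(g\,\partial\sigma)$ that must be filled to define $\beta_i(g\sigma)$ form, in general, infinitely many $H$-orbits of cycles of bounded norm, with no a priori uniform bound on their filling cost.

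There is a decisive test showing this gap is genuine rather than cosmetic. Nowhere in your argument do you use that the $K(G,1)$ is $(n+1)$-\emph{dimensional}; you use only that its $(n+1)$-skeleton is finite, i.e.\ that $G$ is of type $F_{n+1}$. If your scheme for bounding $\beta$ worked, it would therefore prove $FV_H^{n+1}\preceq FV_G^{n+1}$ whenever $G$ is $F_{n+1}$ and $H\leq G$ is $F_{n+1}$. But this is false: for $n=1$ the Heisenberg embedding $\mathcal H_3\leq \mathcal H_5$ has $H$ of type $F_\infty$, $G$ of type $F_\infty$, yet $FV_{\mathcal H_3}^2$ is cubic while $FV_{\mathcal H_5}^2$ is quadratic. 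Hence no construction of a bounded $\beta$ that ignores the dimension hypothesis can succeed.

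The paper's proof avoids this obstruction by working only at the level of $n$-cycles rather than with full chain maps. One replaces the target by a mapping cylinder $Y$ of a cellular map $X\to W$ (with $X$ the $(n+1)$-skeleton of a $K(H,1)$ and $W$ the finite $(n+1)$-dimensional $K(G,1)$), so that $\widetilde X$ sits as an $H$-equivariant subcomplex of $\widetilde Y$. The dimension hypothesis enters exactly here: Schanuel's Lemma applied to the length-$(n+1)$ resolution coming from $\widetilde W$ forces $Z_n(\widetilde Y)$ to be finitely generated and stably free over $\ZG$, and the Eilenberg trick makes it free. One then has a short exact sequence $0\to Z_n(\widetilde X)\to Z_n(\widetilde Y)\to Z_n(\widetilde Y,\widetilde X)\to 0$ of $\ZH$-modules with projective quotient, and the Retraction Lemma produces a bounded $\ZH$-retraction $\rho\colon Z_n(\widetilde Y)\to Z_n(\widetilde X)$. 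This single bounded map replaces your $\beta_{n+1}$, $\beta_n$ and $\eta_n$ simultaneously, and its boundedness comes from the finite $\ZG$-generation of $Z_n(\widetilde Y)$---which is precisely where the $(n+1)$-dimensionality is spent.
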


Some examples that contrast with Theorem~\ref{SubgroupDehn} are the following.  In~\cite{NonHyp}, Noel Brady constructed a group $G$ admitting a finite $3$--dimensional $K(G,1)$ such that $FV_{G}^2$ is linear, and $G$ contains a subgroup $H \leq G$ of type $F_2$ with $FV_{H}^2$ at least quadratic. Another source of examples are the generalized Heisenberg groups $\mathcal H_{2n+1}$, for which Robert Young computed the homological filling invariants in \cite{YoungHeisenberg, Young13}. For instance, $\mathcal H_5$ admits a finite $5$--dimensional $K(\mathcal H_5, 1)$ and has quadratic $FV_{\mathcal H_5}^2$. On the other hand, $\mathcal H_3$ can be embedded in $\mathcal H_5$, admits a $3$--dimensional $K(\mathcal H_3, 1)$, and has cubic $FV_{\mathcal H_3}^2$.  Likewise, $\mathcal H_5$ has quadratic $FV_{\mathcal H_5}^3$ and can be embedded in $\mathcal H_7$ which has $FV_{\mathcal H_7}^3$ polynomial of degree $3/2$.

Theorem~\ref{SubgroupDehn} also imposes constraints on certain well known constructions. For example, given a finitely generated group $H$ with decidable word problem in nondeterministic polynomial time, 
 Birget, Ol'shanskii, Rips and Sapir produce an embedding of $H$ into a finitely presented group $G$ with polynomial Dehn function~\cite{BORS02}. For this construction, Theorem~\ref{SubgroupDehn} implies that if $H$ has a finite $2$-dimensional $K(H,1)$ and $FV_H^2$ is not bounded by a polynomial function, then $G$ does not admit a finite $2$-dimensional $K(G, 1)$. A particular example of such a  group $H$ is the Baumslag-Solitar group $B(m,n)$ with $|m|\neq|n|$, for which the embedding constraint is known~\cite[Thm A]{Ge92}.

We discuss some applications of Theorem~\ref{SubgroupDehn} to hyperbolic groups and homotopical filling functions below. Recall that a group $G$ is \emph{hyperbolic} if it has a linear Dehn function.
 In~\cite{Ge96}, Gersten proved the following:

\begin{theorem}\label{GerstenSubgroup}\cite[Thm 4.6]{Ge96}  Let $G$ be a hyperbolic group of cohomological dimension $2$. Then every finitely presented subgroup $H \leq G$ is hyperbolic. 
\end{theorem}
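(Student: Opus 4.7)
The plan is to deduce Theorem~\ref{GerstenSubgroup} from Theorem~\ref{SubgroupDehn} combined with two external inputs: the existence of a finite $2$-dimensional $K(G,1)$ for $G$, and Gersten's characterization of hyperbolic groups by a linear homological filling function.

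First, I would verify the hypotheses of Theorem~\ref{SubgroupDehn} applied with $n=1$. The subgroup $H$ is finitely presented, hence of type $F_2$. For the ambient group, I would use that a hyperbolic group $G$ of cohomological dimension $2$ is torsion-free and admits a finite $2$-dimensional $K(G,1)$; this is the only place where the $\mathrm{cd}(G)=2$ hypothesis enters. Next, I would observe that $FV_G^2$ is linear: the Dehn function $\delta_G$ of the hyperbolic group $G$ is linear, and in general $FV_G^2 \leq \delta_G$. Applying Theorem~\ref{SubgroupDehn} then yields $FV_H^2 \preceq FV_G^2$, so $FV_H^2$ is linear as well.

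To close the argument I would invoke Gersten's characterization theorem: a finitely presented group whose homological filling function $FV^2$ is linear must be hyperbolic. Applied to $H$, this gives the desired conclusion that $H$ is hyperbolic.

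The main obstacles are the two external inputs rather than any new internal step. Establishing a finite $2$-dimensional $K(G,1)$ for a hyperbolic group of cohomological dimension $2$ is a delicate point related to the Eilenberg--Ganea problem in this setting, and Gersten's characterization of hyperbolicity by linear $FV^2$ is itself a substantial result that furnishes the converse link from the filling function back to the geometric condition of being hyperbolic. Once these two inputs are taken for granted, the application of Theorem~\ref{SubgroupDehn} is direct and the remainder of the proof is a short chain of implications.
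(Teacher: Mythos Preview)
First, note that the paper does not give its own proof of Theorem~\ref{GerstenSubgroup}: the result is quoted from Gersten~\cite{Ge96} as background and motivation for the higher-dimensional generalization, so there is no in-paper argument to compare against.

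That said, your proposed derivation from Theorem~\ref{SubgroupDehn} has a genuine gap precisely at the step you yourself flag as delicate. Theorem~\ref{SubgroupDehn} with $n=1$ requires $G$ to admit a finite $2$-dimensional $K(G,1)$, whereas the hypothesis of Theorem~\ref{GerstenSubgroup} is only $\mathrm{cd}(G)=2$. Passing from cohomological dimension $2$ to geometric dimension $2$ is exactly the Eilenberg--Ganea problem, which is open in general and not known even for hyperbolic groups. The paper is explicitly aware of this obstruction: Corollary~\ref{GerstenHigherDimension} is deliberately phrased with \emph{geometric} dimension rather than cohomological dimension, and the surrounding text remarks that the Eilenberg--Ganea theorem is only available in dimensions $\geq 3$. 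So your reduction to Theorem~\ref{SubgroupDehn} does not go through as stated; you have correctly located the obstruction but not removed it.

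Gersten's original argument in~\cite{Ge96} avoids this issue by working directly with finite projective $\ZG$-resolutions of length $2$ rather than with a $2$-dimensional $K(G,1)$: Schanuel's Lemma and the Retraction Lemma (Lemma~\ref{RetractionLemma} here) are applied at the level of modules, so only $\mathrm{cd}(G)=2$ is needed and no geometric Eilenberg--Ganea input enters. The remaining ingredients you cite (linearity of $FV_G^2$ for hyperbolic $G$, and the converse that linear $FV_H^2$ implies $H$ hyperbolic) are indeed the ones Gersten uses.
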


Gersten's result does not hold in higher dimensions as Brady has exhibited a hyperbolic group $G$ of cohomological dimension $3$ containing a non--hyperbolic finitely presented subgroup $H \leq G$~\cite{NonHyp}. We can however, obtain a result similar to Theorem~\ref{GerstenSubgroup} by considering homotopical filling functions of higher dimensions. The \emph{$n^{\text{th}}$--homotopical filling function} $\delta_G^n$ of a group $G$ is defined analogously to $FV_G^{n+1}$ but restricts to filling $n$--spheres with $(n+1)$--balls inside the universal cover of $K(G,1)$ with finite $(n+1)$--skeleton. Roughly speaking, $\delta_G^n(k)$ bounds the minimum volume required to fill an $n$--sphere of volume at most $k$, with an $(n+1)$--ball. Precise definitions of ``volume" and $\delta_G^n$ can be found in~\cite{AWP, Snowflake}.

\begin{corollary}\label{GerstenHigherDimension} Let $G$ be a hyperbolic group of geometric dimension $n+1$, where $n \geq 2$. Let $H \leq G$ be of type $F_{n+1}$. Then $\delta_H^n$ is linear.
\end{corollary}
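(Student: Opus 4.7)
The plan is to reduce the corollary to Theorem~\ref{SubgroupDehn} together with two classical facts: first, that hyperbolic groups satisfy linear homological isoperimetric inequalities in every dimension in which they are of type $F$; and second, that in dimensions $n\ge 2$ the homological filling function $FV^{n+1}$ and the homotopical filling function $\delta^n$ are comparable for groups of type $F_{n+1}$.

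First I would verify the hypotheses of Theorem~\ref{SubgroupDehn} hold in our setting. Since $G$ is assumed to have geometric dimension $n+1$, it admits a finite $(n+1)$--dimensional $K(G,1)$, which is exactly what Theorem~\ref{SubgroupDehn} requires of the ambient group; and $H$ is of type $F_{n+1}$ by hypothesis. Therefore Theorem~\ref{SubgroupDehn} applies and yields the inequality $FV_H^{n+1} \preceq FV_G^{n+1}$.

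Second, I would appeal to the fact that a hyperbolic group admitting a finite $(n+1)$--dimensional $K(G,1)$ has linear $n^{\text{th}}$--homological filling function; this can be attributed to Mineyev's bounded resolutions for hyperbolic groups (or equivalently derived from the existence of a combing/cone type on the Cayley $(n+1)$--complex). Combining this with the previous step, $FV_H^{n+1}$ is linear as well.

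Finally, for $n \geq 2$, I would use the standard comparison between homological and homotopical filling functions in higher dimensions: if $H$ is of type $F_{n+1}$ and $FV_H^{n+1}$ is linear, then $\delta_H^n$ is linear. The underlying idea is that in the universal cover of a $K(H,1)$ with finite $(n+1)$--skeleton, any admissible singular $n$--sphere determines a cellular $n$--cycle of comparable volume (via simplicial/cellular approximation and the Hurewicz theorem for the $n$--connected cover of the $(n+1)$--skeleton), and conversely a linear-volume $(n+1)$--chain filling this cycle can be promoted to an $(n+1)$--ball filling the sphere without loss of the linear bound when $n\ge 2$, because $\pi_k$ of the relevant skeleta vanish in the intervening dimensions. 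The main obstacle I anticipate is precisely this last implication: the comparison from linear $FV^{n+1}$ to linear $\delta^n$ is not entirely formal and relies on a technical result (the restriction $n\ge 2$ in the statement is present exactly for this reason), so the proof should invoke an appropriate reference (for example, along the lines of the comparison results in the literature on higher-dimensional isoperimetric functions, e.g.~\cite{AWP,Snowflake}) rather than reproving it.
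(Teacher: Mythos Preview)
Your overall strategy matches the paper's: apply Theorem~\ref{SubgroupDehn}, use that hyperbolic groups have linear $FV^{n+1}$ (the paper cites Lang~\cite{Lang}, Theorem~\ref{ThmLang}, rather than Mineyev), and then invoke the comparison $\delta^n \preceq FV^{n+1}$ for $n\ge 2$ (Theorem~\ref{HomotopicalHomological}).

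There is, however, a genuine gap in your first step. You write that since $G$ has geometric dimension $n+1$, it admits a \emph{finite} $(n+1)$--dimensional $K(G,1)$. Geometric dimension only guarantees the existence of an $(n+1)$--dimensional $K(G,1)$; it says nothing about finiteness of that complex. Theorem~\ref{SubgroupDehn} requires a \emph{finite} $(n+1)$--dimensional $K(G,1)$, so this step needs justification. The paper fills this gap by first invoking Rips' theorem that a hyperbolic group admits a \emph{compact} $K(G,1)$ (of possibly large dimension), and then applying the Eilenberg--Ganea theorem to bring the dimension down to $n+1$ while preserving compactness. You should insert this argument; otherwise the hypothesis of Theorem~\ref{SubgroupDehn} is not verified.
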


Recall that the \emph{geometric dimension} of a group $G$ is the minimum dimension among $K(G,1)$'s. The Eilenberg--Ganea Theorem~\cite{Brown, EilenbergGanea} states that the cohomological and geometric dimensions of a group $G$ are equal for dimensions greater or equal than $3$. This justifies our use of geometric dimension in the corollary above. In addition to Corollary~\ref{GerstenHigherDimension}, we have the following homotopical version of Theorem~\ref{SubgroupDehn} for sufficiently large $n$.

\begin{corollary}\label{HomotopicalMain} Let $n \geq 3$. Let $G$ be a group admitting a finite $(n+1)$--dimensional $K(G,1)$. Let $H \leq G$ be of type $F_{n+1}$. Then $\delta_H^n \preceq \delta_G^n$.
\end{corollary}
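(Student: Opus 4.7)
The plan is to reduce Corollary~\ref{HomotopicalMain} to Theorem~\ref{SubgroupDehn} via the standard equivalence
$$\delta_K^n \;\sim\; FV_K^{n+1}$$
valid for any group $K$ of type $F_{n+1}$ when $n\geq 3$; compare the discussions in~\cite{AWP,Snowflake}. Under the hypotheses of the corollary, $G$ admits a finite $(n+1)$-dimensional $K(G,1)$ and is therefore of type $F_{n+1}$, and $H$ is of type $F_{n+1}$ by assumption. Applying the equivalence to both $G$ and $H$ and inserting Theorem~\ref{SubgroupDehn} in the middle yields
$$\delta_H^n \;\sim\; FV_H^{n+1} \;\preceq\; FV_G^{n+1} \;\sim\; \delta_G^n,$$
which is the claimed bound.

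The substance of the argument is the equivalence $\delta_K^n \sim FV_K^{n+1}$. The direction $\delta_K^n \succeq FV_K^{n+1}$ is the easier one: cellular approximation converts an admissible sphere $S^n\to \widetilde X$ of volume $k$ together with its homotopical $(n+1)$-ball filling into a cellular $n$-cycle of volume at most $k$ and a cellular $(n+1)$-chain filling of equal volume. For the reverse direction I would take a cellular $n$-cycle $\gamma$ in $\widetilde X^{(n)}$ of volume $k$ and invoke Hurewicz: since $\widetilde X$ is contractible, $\widetilde X^{(n)}$ is $(n-1)$-connected, and the Hurewicz map $\pi_n(\widetilde X^{(n)})\to H_n(\widetilde X^{(n)})$ is an isomorphism. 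The class $[\gamma]$ therefore admits a continuous representative out of a wedge $\bigvee_{i=1}^{k} S_i^n$ of combined volume $k$; splicing these summands by short tubes $I\times S^{n-1}$ mapped into $\widetilde X^{(n)}$, each of bounded cellular volume, produces a single admissible sphere of volume $O(k)$. Its optimal homotopical $(n+1)$-ball filling then contributes a cellular $(n+1)$-chain bounding $\gamma$ whose volume is at most $\delta_K^n(O(k))$, up to the additive and multiplicative constants built into the relation $\preceq$.

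The main obstacle in carrying out this plan is the tube construction in the second direction. One must ensure that each tube can be realized by finitely many $n$-cells in $\widetilde X^{(n)}$ with volume bounded by a universal constant, and that the overall splicing leaves $[\gamma]$ invariant modulo $(n+1)$-boundaries. The hypothesis $n\geq 3$ is precisely what makes this routine: the ambient dimension leaves enough room to map the $(n-1)$-dimensional boundaries of tubes into the $n$-skeleton without low-dimensional obstructions, and the Hurewicz step proceeds without the delicate $\pi_1$ or $\pi_2$ considerations that complicate the analogous argument in dimension two; this is the reason the statement restricts to $n\geq 3$ rather than $n\geq 2$.
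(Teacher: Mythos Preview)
Your first paragraph is exactly the paper's proof: invoke the equivalence $\delta_K^n \sim FV_K^{n+1}$ for $n\geq 3$ (stated in the paper as Theorem~\ref{HomotopicalHomological}, citing~\cite{ABDY}) together with Theorem~\ref{SubgroupDehn}. Nothing more is required.

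Your subsequent sketch of the equivalence itself is superfluous for the corollary, and it also contains a direction error worth noting. What you label ``the reverse direction'' is in fact the same direction as the one you call easy: starting from a cycle $\gamma$, representing it by a sphere via Hurewicz, filling that sphere by a ball, and reading off a chain bounding $\gamma$ establishes $FV_K^{n+1}\preceq \delta_K^n$, not the converse. You have therefore argued twice for $FV_K^{n+1}\preceq \delta_K^n$ and not at all for $\delta_K^n\preceq FV_K^{n+1}$. The genuinely harder direction is the latter: given a sphere of volume $k$, its image is a cycle with a chain filling of volume at most $FV_K^{n+1}(k)$, and one must then upgrade that \emph{chain} to an admissible \emph{ball} of comparable volume, which is where the real work lies. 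Since the paper imports the full equivalence as a black box, this does not affect the validity of your proof of the corollary.
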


Corollaries~\ref{GerstenHigherDimension} and~\ref{HomotopicalMain} follow from Theorem~\ref{SubgroupDehn} and the following results:

\begin{theorem}\label{HomotopicalHomological}~\cite[pg. 1 and references therein]{ABDY} For $n \geq 3$, the $n^{\text{th}}$--homotopical and homological filling functions $\delta_G^n$ and $FV_G^{n+1}$ are equivalent. For $n =2, \ \delta_G^2 \preceq FV_G^3$.
\end{theorem}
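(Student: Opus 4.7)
The plan is to establish each inequality separately, using a volume-controlled translation between cellular $(n+1)$-chains and singular $(n+1)$-balls in a fixed finite $(n+1)$-skeleton $X^{(n+1)}$ of a $K(G,1)$. The direction $\delta_G^n \preceq FV_G^{n+1}$ will go through for all $n \geq 2$, which accounts for the $n=2$ statement, while the reverse inequality $FV_G^{n+1} \preceq \delta_G^n$ will require $n \geq 3$, obstructed in the $n=2$ case by the nontrivial fundamental group of the circle $S^{n-1}=S^1$ along which $(n+1)$-cells are attached.

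For $\delta_G^n \preceq FV_G^{n+1}$, I would consider a cellular map $f\colon S^n \to \widetilde{X}^{(n+1)}$ of volume at most $k$. Pushing forward the fundamental class yields an admissible $n$-cycle $f_*[S^n]$ of the same volume, and applying $FV_G^{n+1}$ produces a cellular $(n+1)$-chain $\mu$ with $\partial \mu = f_*[S^n]$ and volume at most $FV_G^{n+1}(k)$. The key step is to realize $\mu$ geometrically by constructing an $(n+1)$-dimensional combinatorial ball $W$ with $\partial W = S^n$ together with a cellular map $W \to \widetilde{X}^{(n+1)}$ extending $f$ whose induced chain is $\mu$. One builds $W$ by taking one $(n+1)$-cell for each $(n+1)$-cell appearing in $\mu$ with multiplicity and gluing along the boundary relations prescribed by $\partial \mu$. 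Verifying that $W$ can be chosen to be an actual ball (rather than just an oriented pseudomanifold with the right boundary) uses simple connectivity of $\widetilde{X}$ together with simple connectivity of the attaching spheres $S^{n-1}$, which holds for $n \geq 2$.

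For $FV_G^{n+1} \preceq \delta_G^n$ with $n \geq 3$, I would begin with a cellular admissible $n$-cycle $\gamma$ of volume at most $k$ and decompose it as a sum $\gamma = \sum_i (g_i)_*[S^n]$ of spherical cycles with total volume linearly bounded by $k$. Such a decomposition exists because $\widetilde{X}$ is contractible: the Hurewicz homomorphism $\pi_n(\widetilde{X}^{(n+1)}) \to H_n(\widetilde{X}^{(n+1)})$ is surjective, and the cellular structure allows one to produce explicit spherical representatives by pairing adjacent $n$-cells along shared $(n-1)$-cells. Each $g_i$ is filled by a ball using $\delta_G^n$, and the induced cellular chains sum to a filling of $\gamma$. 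The main obstacle, and the source of the dimensional restriction, is the volume-controlled implementation of the sphere decomposition: the gluing of adjacent $n$-cells traverses links homeomorphic to $S^{n-1}$, and the construction closes up into a genuine sphere of controlled volume only when $\pi_1(S^{n-1})=0$, i.e.\ when $n \geq 3$. For $n=2$ this procedure fails because paths along $S^1$ can wrap nontrivially, which is precisely why only the one-sided inequality is available in that case.
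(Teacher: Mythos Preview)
The paper does not prove this theorem; it is quoted from the literature (\cite{ABDY} and references therein) and only the inequality $FV_G^{n+1}\preceq\delta_G^n$ for $n\geq 3$ is sketched, in Section~\ref{sec:FVGfinite}. So there is no proof in the paper to compare against, only that sketch. On that point, your approach to $FV_G^{n+1}\preceq\delta_G^n$ differs from the paper's: the paper (following Gromov and White) represents an arbitrary $n$-cycle $\gamma$ as the push-forward of the fundamental class under a \emph{single} map $f\colon S^n\to\widetilde X$ with $vol(f)=\|\gamma\|_1$, whereas you propose decomposing $\gamma$ as a sum of spherical cycles. The single-sphere representation is cleaner and is what the cited references actually provide.

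There is, however, a genuine problem in your write-up. In the paragraph on $\delta_G^n\preceq FV_G^{n+1}$ you assert that ``simple connectivity of the attaching spheres $S^{n-1}$ \ldots\ holds for $n\geq 2$''; in the very next paragraph you correctly note that $\pi_1(S^{n-1})=0$ only when $n\geq 3$. These two statements contradict one another, and the first is false: $S^1$ is not simply connected. Since the inequality $\delta_G^2\preceq FV_G^3$ is precisely the $n=2$ content of the theorem, your stated mechanism cannot be the one that carries that case. The actual argument for $\delta_G^n\preceq FV_G^{n+1}$ when $n\geq 2$ (as in \cite{Snowflake} or Gromov's work) does not hinge on $\pi_1(S^{n-1})$; it uses the $(n-1)$-connectedness of $\widetilde X$ and a careful inductive construction of an admissible extension over skeleta, not merely a pseudomanifold gluing of the cells of $\mu$. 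Your description ``gluing along the boundary relations prescribed by $\partial\mu$'' produces at best an oriented pseudomanifold whose boundary realizes the cycle $f_*[S^n]$, not the specific map $f$, and converting that into an admissible ball extending $f$ is exactly the delicate step; your sketch does not address it, and the reason you give for why it succeeds at $n=2$ is incorrect.
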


\begin{theorem}\label{ThmLang}\cite{Lang} Let $G$ be a hyperbolic group. Then $FV^{n+1}_G$ is linear for all $n \geq 1$. 
\end{theorem}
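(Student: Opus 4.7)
The plan is to reduce the theorem to the existence of a $G$-equivariant, uniformly norm-bounded, contracting chain homotopy on the cellular chain complex of the universal cover of a compact $K(G,1)$ model for $G$. For the model, use the Rips complex $P_d(G)$ with $d$ sufficiently large in the hyperbolicity constant. By Rips's theorem $P_d(G)$ is contractible, and $G$ acts on it freely and cocompactly, so the quotient is a compact $K(G,1)$ whose universal cover $\widetilde X = P_d(G)$ is contractible, locally finite, and of bounded geometry. In particular $G$ is of type $F_\infty$, and it suffices to estimate $FV_G^{n+1}$ in this fixed model.

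The key step is the construction of a $G$-equivariant chain contraction $H_\bullet\colon C_\bullet(\widetilde X) \to C_{\bullet+1}(\widetilde X)$ satisfying $\partial H_k + H_{k-1}\partial = \mathrm{id}$ for $k \geq 1$, with a constant $C(n)$ such that $\|H_k(\sigma)\|_1 \leq C(n)$ for every cell $\sigma$ of dimension $0 \leq k \leq n$. The base case is Mineyev's bounded homological bicombing for hyperbolic groups: a $G$-equivariant assignment $p(x,y) \in C_1(\widetilde X)$ with $\partial p(x,y) = y - x$, whose triangle cycles $p(x,y) + p(y,z) - p(x,z)$ have $\ell^1$ mass uniformly bounded in $x,y,z$. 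Inductively, for each $G$-orbit representative $k$-cell $\sigma$, the chain $\sigma - H_{k-1}(\partial \sigma)$ is a $k$-cycle of bounded $\ell^1$ norm and, using hyperbolicity to control the support of the correction term, of bounded-diameter support. Hyperbolicity together with the bounded geometry of $\widetilde X$ then provides a uniformly bounded-norm $(k+1)$-chain filling this cycle, which we declare to be $H_k(\sigma)$ and extend $G$-equivariantly and linearly.

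Granted $H_\bullet$, the theorem follows immediately. For an $n$-cycle $\gamma = \sum_\sigma n_\sigma \sigma$ of volume $\|\gamma\|_1 \leq k$, the chain $\mu := H_n(\gamma)$ satisfies $\partial \mu = \gamma - H_{n-1}(\partial \gamma) = \gamma$, while $\|\mu\|_1 \leq \sum_\sigma |n_\sigma|\,\|H_n(\sigma)\|_1 \leq C(n)\,k$. Therefore $FV_G^{n+1}(k) \leq C(n)\,k$ for every $k$, establishing linearity.

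The main obstacle is the inductive construction of bounded-norm fillings at each dimension. The base case already demands Mineyev's delicate averaging construction using the Gromov boundary and the fellow-traveller property of geodesics in a $\delta$-hyperbolic space. The inductive step rests on the geometric fact that in a contractible hyperbolic complex of bounded geometry, cycles of bounded $\ell^1$ norm supported in a bounded-diameter region admit fillings whose $\ell^1$ norm is controlled by a universal constant. This is precisely where hyperbolicity is indispensable: without it, one encounters the phenomenon illustrated by the Heisenberg groups $\mathcal H_{2n+1}$ mentioned earlier, whose higher-dimensional filling functions are strictly super-linear.
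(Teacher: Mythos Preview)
The paper does not prove this theorem; it is quoted from Lang's paper~\cite{Lang} and used as a black box in the proof of Corollary~\ref{GerstenHigherDimension}. There is no argument in the present paper to compare yours against.

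As for the sketch itself, the overall strategy---work in the Rips complex and build cone-type fillings inductively over the skeleta---is the right one and is close in spirit to what Lang and Mineyev actually do, but the specific object you posit cannot exist. You ask for a \emph{$G$-equivariant} chain homotopy $H_\bullet$ with $\partial H_k+H_{k-1}\partial=\mathrm{id}$ on $C_k$ for all $k\ge 1$, and with $\|H_0(v)\|_1$ uniformly bounded over vertices. Taking boundaries in the relation at $k=1$ forces $\partial H_0(\partial e)=\partial e$ for every edge $e$, so $v\mapsto v-\partial H_0(v)$ is constant on the connected complex $\widetilde X$, equal to some $c\in C_0$. Equivariance of $H_0$ then makes $c$ a $G$-invariant element of $C_0$; since $G$ is infinite and acts properly on the vertex set of $P_d(G)$, the only such element is $0$, yet $c$ has augmentation $1$. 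Hence no $G$-equivariant $H_0$ satisfies the chain-homotopy identity, irrespective of any norm bound, and your induction cannot get started. (A smaller issue: the $G$-action on $P_d(G)$ is free only when $G$ is torsion-free.)

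You already name the fix without deploying it: Mineyev's bicombing is a \emph{two-variable} map $p(x,y)$, equivariant for the diagonal action, not a one-variable $H_0$. The way it is actually used is to cone a given cycle $\gamma$ to a basepoint chosen \emph{in the support of $\gamma$} (or, in Lang's argument, to perform a geometric coning in the Rips complex); the bounded-triangle property is precisely what keeps the resulting filling linear in $\|\gamma\|_1$ even though the individual coning paths are long. The higher-dimensional inductive step is then carried out relative to this moving basepoint rather than via a global equivariant contraction. Your assertion that the intermediate cycles $\sigma-H_{k-1}(\partial\sigma)$ have bounded-diameter support also hinges on this: with a fixed global basepoint the supports spread out, whereas coning near $\gamma$ keeps everything local and makes the bounded-geometry argument go through.
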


\begin{proof}[Proof of Corollary~\ref{GerstenHigherDimension}] 
A theorem of Rips imples that $G$ admits a compact $K(G,1)$, see~\cite{GH90} and then the Eilenberg--Ganea Theorem implies that $G$ admits a compact $(n+1)$--dimensional $K(G,1)$, see~\cite{Brown}.  Theorems~\ref{SubgroupDehn} and~\ref{ThmLang} imply that $FV_H^{n+1}$ is linear. It then follows from  Theorem~\ref{HomotopicalHomological} that $\delta_H^n$ is also linear. 
\end{proof}

\begin{proof}[Proof of Corollary~\ref{HomotopicalMain}] Apply Theorems~\ref{HomotopicalHomological} and~\ref{SubgroupDehn}.
\end{proof}

\begin{remark}Corollary~\ref{GerstenHigherDimension} does not apply to Brady's example $H\leq G$   mentioned above since $H$ is not of type $F_3$.  It is an open question whether or not the subgroups $H$ in Corollary~\ref{GerstenHigherDimension}  are in fact hyperbolic.
\end{remark}

\begin{remark} It is an open question whether or not the statement of Corollary~\ref{HomotopicalMain} holds for $n = 1$ or $2$. In general $\delta_G^1 \nsim FV_G^2$ and $\delta_G^2 \nsim FV_G^3$, examples of such groups are given in~\cite{ABDY, Young}. 
\end{remark}

\subsection{Outline of the Paper}
The rest of the paper is organized into three sections. Section~\ref{sec:fillingnorm} contains the definition of a filling norm on a finitely generated $\ZG$-module and lemmas required for the proof of Theorem~\ref{SubgroupDehn}. Section~\ref{sec:fvg} contains algebraic and topological definitions for $FV_G^{n+1}$. Section~\ref{MainSection} contains the proof of Theorem~\ref{SubgroupDehn}.

\subsection{Acknowledgments}
Thanks to Noel Brady and Mark Sapir for comments on an earlier version of the article.
We especially thank the referee for a list of useful comments and corrections. We acknowledge  funding by the Natural Sciences and Engineering Research Council of Canada, NSERC.

\section{Filling Norms on $\ZG$-Modules}\label{sec:fillingnorm}

In this section we define the notion of a filling-norm on a finitely generated $\ZG$--module.  Most ideas in this section are based on the work of Gersten in~\cite{Ge96}.  The section contains four lemmas on which the proof of the main result of the paper relies on.

\begin{definition}[Norm on Abelian Groups] A \emph{norm} on an Abelian group $A$ is a function $\| \cdot \| \colon A \rightarrow \mathbb R$ satisfying the following conditions:
\begin{itemize}
\item $\| a \| \geq 0$ with equality if and only if $a = 0$, and
\item $ \| a \| + \| a' \| \geq \| a + a' \|$.
\end{itemize}
If, in addition, the norm satisfies
\begin{itemize}
\item $\| n a \| = |n| \cdot \| a \|$, for $n \in \mathbb Z$,
\end{itemize}
then it is called a \emph{regular norm}.
\end{definition}
If $A$ is free Abelian with basis $X$, then $X$ induces a regular $\ell_1$--norm on $A$ given by $\left \| \ \displaystyle \sum_{x \in X} n_x x  \ \right \|_1 = \displaystyle \sum_{x \in X} | n_x |$, where $n_x \in \mathbb Z$.

\begin{definition}[Linearly Equivalent Norms] Two norms $\| \cdot \|$ and $\| \cdot \| '$ on a $\mathbb{Z}$--module $M$ are \emph{linearly equivalent} if there exists a fixed constant $C > 0$ such that \[C^{-1} \| m \| \leq \| m \| ' \leq C \| m \|\] for all $m \in M$.  This is an equivalence relation and the equivalence class of a norm $\|\cdot\|$ is called the \emph{linear equivalence class of $\|\cdot\|$}. 
\end{definition}

\begin{definition}[Based Free $\ZG$--modules and Induced $\ell_1$-norms]\label{def:based}
Suppose $G$ is a group and $F$ is a  free $\ZG$--module with $\ZG-$basis $\{ \alpha_1 , \dots , \alpha_n \}$. Then $\left \{ g \alpha_i  \colon g \in G , 1 \leq i \leq n \right \}$ is a free $\mathbb Z $-basis for $F$ as a (free) $\mathbb Z$-module.  This free $\mathbb Z$--basis  induces a $G$-equivariant $\ell_1$-norm $\| \cdot \|_1$ on $F$. We call a free $\ZG-$module \emph{based} if it is understood to have a fixed basis, and we use this basis for the induced $\ell_1$--norm $\| \cdot \|_1$. 
\end{definition}

\begin{definition}[Filling Norms on $\ZG$-modules] \label{def:filling-norm}
Let $\eta\colon F\to M$ be a surjective homomorphism of $\ZG$-modules and suppose that $F$ is free, finitely generated, and based. The \emph{filling  norm  on $M$ induced by $\eta$ and the free $\ZG$-basis of $F$} is defined as
\[\| m \|_{\eta} =  \min \left \{ \  \| x \|_1 \colon x \in F , \ \eta(x) = m \right \}.\] 
Observe that this norm is $G$-equivariant.
\end{definition} 

\begin{remark}
Gersten observed that filling norms are not in general regular norms. He illustrated this fact with the following example~\cite{Ge96-2}. Let $X$ be the universal cover of the standard complex of the group presentation $\langle x | x^2, x^{2k} \rangle$, where $k\geq 2$. The filling norm on  the integral cellular $1$-cycles $Z_1(X)$ induced by $C_2(X)\overset\partial\to Z_1(X)$ is not regular since $\|2x\|_\partial=\|2kx\|_\partial=1$.
\end{remark}

\begin{remark}[Induced $\ell_1$-norms are Filling Norms]\label{rem:l1}
If $F$ is a finitely generated based free $\ZG$-module, then the  $\ell_1$-norm induced by a free $\ZG$-basis is a filling norm. 
\end{remark}

The following lemma is reminiscent of the fact that linear operators on finite dimensional normed spaces are bounded. 

\begin{lemma}[$\ZG$-Morphisms between Free Modules are Bounded]\label{EquivFree}\cite[Proof of Proposition 4.4]{Ge96} Let $\varphi : F \rightarrow F'$ be a homomorphism between finitely generated, free, based $\ZG-$modules. Let $\| \cdot \|_1$ and $\| \cdot \|_1'$ denote the induced $\ell_1$--norms of $F$ and $F'$. Then there exists a constant $C>0$ such that for all $x \in F$ 
\[ \| \varphi(x) \|_1 ' \leq C\cdot \| x \|_1. \]
\end{lemma}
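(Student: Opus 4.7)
The plan is to exploit the fact that $F$ is finitely generated as a $\ZG$-module and that the $\ell_1$-norm on $F'$ is $G$-equivariant, so that a bound on $\varphi$ applied to the finite $\ZG$-basis of $F$ will extend to all of $F$ via the triangle inequality.

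First, I would fix a $\ZG$-basis $\{\alpha_1,\dots,\alpha_n\}$ of $F$, so that every $x \in F$ admits a unique expression
\[
x = \sum_{i=1}^{n} \sum_{g \in G} c_{i,g}\, g\alpha_i,
\]
with $c_{i,g} \in \Z$ and only finitely many nonzero, and by Definition~\ref{def:based},
\[
\|x\|_1 = \sum_{i=1}^{n} \sum_{g \in G} |c_{i,g}|.
\]
Since $\varphi(\alpha_i) \in F'$ is a finite $\Z$-linear combination of elements $h\alpha'_j$ from the $\Z$-basis of $F'$, each value $\|\varphi(\alpha_i)\|_1'$ is a finite positive integer. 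Set
\[
C := \max_{1 \leq i \leq n} \|\varphi(\alpha_i)\|_1'.
\]

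Next, I would use that $\varphi$ is $\ZG$-linear, hence in particular $G$-equivariant, together with the $G$-equivariance of the $\ell_1$-norm on $F'$ noted in Definition~\ref{def:based}, to conclude
\[
\|\varphi(g\alpha_i)\|_1' = \|g\,\varphi(\alpha_i)\|_1' = \|\varphi(\alpha_i)\|_1' \leq C
\]
for every $g \in G$ and every $i$. Then a single application of the triangle inequality for the norm $\|\cdot\|_1'$ gives
\[
\|\varphi(x)\|_1'
\leq \sum_{i,g} |c_{i,g}|\cdot \|\varphi(g\alpha_i)\|_1'
\leq C \sum_{i,g} |c_{i,g}|
= C \|x\|_1,
\]
which is the desired inequality.

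There is no real obstacle here; the entire content of the statement is that $F$ is \emph{finitely} generated as a $\ZG$-module (so only finitely many generators need to be bounded) and that the $\ell_1$-norms on both sides are $G$-equivariant (so the bound on generators extends across the orbits $G\alpha_i$ without blowup). The only point worth being a bit careful about is to invoke $G$-equivariance of $\|\cdot\|_1'$ explicitly, since it is this feature, rather than $\ZG$-linearity alone, that keeps the constant $C$ independent of the group element $g$.
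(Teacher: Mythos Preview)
Your proof is correct and follows essentially the same approach as the paper: fix the finite $\ZG$-basis $\{\alpha_1,\dots,\alpha_n\}$ of $F$, set $C=\max_i\|\varphi(\alpha_i)\|_1'$, and combine $G$-equivariance of the $\ell_1$-norm on $F'$ with the triangle inequality to pass from the basis to arbitrary $x$. The only cosmetic difference is that the paper writes $x=\sum_i\lambda_i\alpha_i$ with $\lambda_i\in\ZG$ before expanding each $\lambda_i=\sum_g\lambda_{i,g}g$, whereas you pass directly to the $\Z$-basis expansion; the computations are identical. (One tiny quibble: $\|\varphi(\alpha_i)\|_1'$ need not be \emph{positive}, since $\varphi(\alpha_i)$ could be zero; but this does not affect the argument, and in the trivial case $\varphi=0$ any $C>0$ works.)
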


\begin{proof} 
Let $A = \{ \alpha_1, \dots ,\alpha_n \}$ 
be the  $\ZG-$basis of $F$ inducing the norm $\| \cdot \|_1$. 
Then $\varphi$ is given by a finite $n \times m$ matrix whose entries are elements of $\ZG$. 
Observe that for any $g \in G, \ x \in F$, we have 
$\| x \|_1 = \| gx \|_1$. 
Define $C= \displaystyle \max_{1\leq i \leq n} \left \{ \|\varphi (\alpha_i) \| \right \}$ and let $x \in F$ be arbitrary. Then
 \begin{align} \nonumber 
 \| \varphi(x) \|_1' & = \| \varphi( \lambda_1 \alpha_1 + \dots + \lambda_n \alpha_n) \|_1', \text{ where } \lambda_i \in \mathbb Z G \\ \nonumber 
& \leq \left \| \ \left ( \displaystyle\sum_{g \in G} \lambda_{1,g} g \right ) \varphi(\alpha_1) \right \|_1' + \dots + \left \| \left ( \displaystyle\sum_{g \in G} \lambda_{n,g} g \right ) \varphi(\alpha_n) \ \right \|_1', \text{ where } \lambda_j = \displaystyle\sum_{g \in G} \lambda_{j,g} g  \text{ and }  \lambda_{j,g} \in \mathbb Z  \\ \nonumber 
& \leq  \left ( \displaystyle\sum_{g \in G} |\lambda_{1,g}| \right ) \| \varphi(\alpha_1)\|_1' + \dots + \left ( \displaystyle\sum_{g \in G} |\lambda_{n,g}| \right ) \| \varphi(\alpha_n)\|_1' && \\ \nonumber
& \leq C\left ( \displaystyle\sum_{i =1}^{m} \left ( \displaystyle\sum_{g \in G} |\lambda_{i,g}| \right ) \right ) = C \| x \|_1.  \qedhere
 \end{align}
\end{proof}

\begin{lemma}[$\ZG$-Morphisms with Projective Domain are Bounded]\label{lem:ProjBounded}
Let $\varphi : P \rightarrow Q$ be a homomorphism between finitely generated $\ZG-$modules. Let $\| \cdot \|_P$ and $\| \cdot \|_Q$ denote filling norms on $P$ and $Q$ respectively. If $P$ is projective then there exists a constant $C>0$ such that for all $p \in P$ 
\[ \| \varphi(p) \|_Q \leq C \cdot\| p \|_P .\]
\end{lemma}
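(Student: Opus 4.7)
The plan is to reduce the assertion to Lemma~\ref{EquivFree} by using projectivity of $P$ to lift $\varphi$ into the free cover of $Q$. Fix surjections $\eta_P\colon F_P \to P$ and $\eta_Q\colon F_Q\to Q$ from finitely generated free based $\ZG$-modules that induce $\|\cdot\|_P$ and $\|\cdot\|_Q$, and write $\|\cdot\|_1$ and $\|\cdot\|_1'$ for the associated $\ell_1$-norms on $F_P$ and $F_Q$. Since $P$ is projective and $\eta_Q$ is surjective, the morphism $\varphi$ lifts to a $\ZG$-morphism $\tilde\varphi\colon P\to F_Q$ satisfying $\eta_Q\circ \tilde\varphi = \varphi$.

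Next I would form the composition $\psi=\tilde\varphi\circ \eta_P\colon F_P\to F_Q$. This is a $\ZG$-morphism between finitely generated free based $\ZG$-modules, so Lemma~\ref{EquivFree} supplies a constant $C>0$ such that $\|\psi(x)\|_1'\le C\|x\|_1$ for every $x\in F_P$.

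To conclude, given $p\in P$, choose $x\in F_P$ realizing the filling norm, i.e.\ $\eta_P(x)=p$ and $\|x\|_1=\|p\|_P$ (such an $x$ exists because $\ell_1$-values lie in $\N$, so the minimum in Definition~\ref{def:filling-norm} is attained). Then $\psi(x)=\tilde\varphi(\eta_P(x))=\tilde\varphi(p)$, whence $\eta_Q(\psi(x))=\eta_Q(\tilde\varphi(p))=\varphi(p)$. Since $\psi(x)$ is one preimage of $\varphi(p)$ under $\eta_Q$, the definition of $\|\cdot\|_Q$ gives $\|\varphi(p)\|_Q\le\|\psi(x)\|_1'$; chaining this with the inequality from Lemma~\ref{EquivFree} yields $\|\varphi(p)\|_Q\le C\|x\|_1 = C\|p\|_P$, as required.

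I do not anticipate a serious obstacle: the proof is essentially projective lifting combined with Lemma~\ref{EquivFree}. The one subtlety to keep in mind is that filling norms depend on the specific chosen surjections, so $\tilde\varphi$ must be constructed with respect to the same $\eta_Q$ that defines $\|\cdot\|_Q$; the role of projectivity is precisely to guarantee that a lift through this prescribed $\eta_Q$ exists, which is what allows the problem to be transported onto the free module $F_P$ where Lemma~\ref{EquivFree} applies.
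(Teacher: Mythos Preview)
Your proof is correct and follows essentially the same approach as the paper: lift $\varphi$ through the free cover of $Q$ using projectivity of $P$, compose with the surjection onto $P$ to get a map of free based modules, and invoke Lemma~\ref{EquivFree}. The only cosmetic difference is that you select a norm-realizing preimage $x$ at the outset, whereas the paper works with an arbitrary preimage and minimizes at the end.
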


\begin{proof}
Consider the commutative diagram 
\[\xymatrix{ 
A \ar[r]^{\tilde \varphi} \ar[d]_\rho & B \ar[d] \\
P \ar[r]_\varphi  \ar[ru]^\psi  & Q
}\]
constructed as follows.  Let $A$ and $B$ be finitely generated and based free $\ZG$-modules,  and let  $A\to P$ and $B\to Q$ be surjective morphisms inducing the filling norms $\| \cdot \|_P$ and $\| \cdot \|_Q$.  Since $P$ is projective and $B\to Q$ is surjective, there is a lifting $\psi \colon P \to B$ of $\varphi$; then let $\tilde \varphi$ be the composition $A \overset\rho\to P \overset\psi\to B$. Let $C$ be the constant provided by Lemma~\ref{EquivFree} for $\tilde \varphi$. Let $p\in P$ and let $a \in A$ that maps to $p$. It follows that 
\begin{equation}\nonumber
 \|\varphi (p)\|_Q  \leq \| \psi (p) \|_1 = \| \tilde \varphi (a) \|_1  \leq C  \| a \|_1.
\end{equation}
Since the above inequality holds for any $a \in A$ with $\rho (a)=p$, it follows that 
\begin{equation}\nonumber
\begin{split}
  \|\varphi (p)\|_Q & \leq C \cdot \min_{\rho (a)= p} \left\{\|a\|_1  \right \} \\ 
&= C\cdot \|p\|_P. \qedhere 
\end{split}
\end{equation}
\end{proof}

\begin{lemma}[Equivalence of Filling Norms for $\ZG$-modules]\label{EquivStd}\cite[Lemma 4.1]{Ge96} Any two filling norms on a finitely generated $\ZG$--module $M$ are linearly equivalent. 
\end{lemma}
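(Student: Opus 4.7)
The plan is to deduce the equivalence of any two filling norms on $M$ from Lemma~\ref{lem:ProjBounded} together with Remark~\ref{rem:l1}, by exploiting the fact that although $M$ itself need not be projective, the free modules that define the filling norms are projective.

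More concretely, suppose $\|\cdot\|$ and $\|\cdot\|'$ are two filling norms on $M$, induced respectively by surjective $\ZG$-morphisms $\eta\colon F\to M$ and $\eta'\colon F'\to M$ from finitely generated, based, free $\ZG$-modules $F$ and $F'$. By Remark~\ref{rem:l1} the induced $\ell_1$-norm $\|\cdot\|_1$ on $F$ is itself a filling norm, so Lemma~\ref{lem:ProjBounded} applies to the map $\eta\colon F\to M$, regarded as a $\ZG$-morphism with projective domain, giving a constant $C>0$ such that $\|\eta(x)\|' \leq C\,\|x\|_1$ for every $x\in F$.

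Given $m\in M$, I would take the infimum of the previous inequality over all $x\in F$ with $\eta(x)=m$; by definition of the filling norm $\|\cdot\|$, this yields $\|m\|'\leq C\,\|m\|$. Switching the roles of $\eta$ and $\eta'$, the same argument applied to $\eta'\colon F'\to M$ produces $C'>0$ with $\|m\|\leq C'\,\|m\|'$. Setting $C_0=\max\{C,C'\}$ gives $C_0^{-1}\|m\|\leq \|m\|'\leq C_0\,\|m\|$, which is precisely the linear equivalence of the two norms.

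The argument is essentially immediate once the previous lemmas are available, so I do not expect any serious obstacle; the only subtle point is noticing that one must apply Lemma~\ref{lem:ProjBounded} to the defining surjections $\eta$ and $\eta'$ rather than attempting to apply it directly to the identity $M\to M$, whose domain may fail to be projective.
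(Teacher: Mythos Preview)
Your proof is correct and follows essentially the same approach as the paper. The only cosmetic difference is that the paper explicitly lifts $\eta$ through $\eta'$ to a map $\varphi\colon F\to F'$ and then invokes Lemma~\ref{EquivFree} directly, whereas you absorb that lifting step into a single citation of Lemma~\ref{lem:ProjBounded}; since the proof of Lemma~\ref{lem:ProjBounded} itself performs exactly this lift-and-apply-Lemma~\ref{EquivFree} maneuver, the two arguments unwind to the same thing.
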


\begin{proof} Consider a pair of surjective homomorphisms of $\ZG-$modules $\eta : F \rightarrow M$ and $\eta' : F' \rightarrow M$ such that $F$ and $F'$ are finitely generated, free,  based modules inducing the filling norms $\|\cdot\|_\eta$ and $\|\cdot\|_{\eta'}$ on $M$. Since $\eta'$ is surjective, the universal property of $F$ provides a homomorphism $\varphi$ such that $\eta = \eta' \circ \varphi$. Let $m \in M$ be arbitrary and take $x \in F$ such that $\eta(x) = m$. Since $\eta' \circ \varphi (x) = m$, by Lemma \ref{EquivFree} there exists $C > 0$ such that 
$$\| m \|_{\eta'} = \displaystyle \min_{\eta'(x') = m} \| x' \|_1' \leq \|\varphi (x) \|_1' \leq C\cdot \| x \|_1.$$ As this inequality holds for all $x \in F$ satisfying $\eta(x) = m$, we have $$\|m\|_{\eta'} \leq C\cdot \min_{\eta(x)=m}\left\{\| x \|_1\right\} = C\cdot \| m \|_{\eta}.$$ The other inequality proceeds in a similar manner.
\end{proof}

\begin{lemma}[Retraction Lemma]\label{RetractionLemma}\cite[Prop. 4.4]{Ge96} Let $0 \rightarrow M \overset{\iota} \rightarrow N \rightarrow P \rightarrow 0$ be a short exact sequence of $\ZG-$modules where
\begin{itemize}
\item[1)] $M$ is finitely generated and equipped with a filling-norm $\|\cdot\|_M$. 
\item[2)] $N$ is free, based, and equipped with the induced $\ell_1$-norm $\| \cdot \|_1$.
\item[3)] $P$ is projective.
\end{itemize}
Then there exists a retraction $\rho : N \rightarrow M$ for the inclusion $\iota : M \rightarrow N$ and a fixed  constant $C > 0$ such that $\| \rho(x) \|_M \leq C \| x \|_1$ for all $x \in N$.
\end{lemma}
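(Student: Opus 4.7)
The plan is to build the retraction by splitting the short exact sequence, then read off the bound from Lemma~\ref{lem:ProjBounded}.

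First, I would use the projectivity of $P$ to construct the retraction. Since the quotient map $\pi\colon N \to P$ is surjective and $P$ is projective, the identity $P\to P$ lifts to a $\ZG$-homomorphism $s\colon P\to N$ with $\pi\circ s = \mathrm{id}_P$. This splits the short exact sequence and gives an internal direct sum decomposition $N = \iota(M)\oplus s(P)$. Explicitly, I would define $\rho\colon N\to M$ as follows: for $x\in N$, the element $x - s\circ\pi(x)$ lies in $\ker\pi = \iota(M)$, and since $\iota$ is injective there is a unique $m\in M$ with $\iota(m)= x - s\circ\pi(x)$. Set $\rho(x)= m$. By construction $\rho$ is a $\ZG$-homomorphism and $\rho\circ\iota = \mathrm{id}_M$, so $\rho$ is a retraction of $\iota$.

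Second, I would establish the norm inequality by invoking Lemma~\ref{lem:ProjBounded} directly for $\rho$. The domain $N$ is free and finitely generated (being based), hence projective, and by Remark~\ref{rem:l1} its induced $\ell_1$-norm is itself a filling norm. The codomain $M$ is finitely generated and carries the filling norm $\|\cdot\|_M$ by hypothesis. Thus Lemma~\ref{lem:ProjBounded} applies to $\rho\colon N\to M$ and produces a constant $C>0$ such that
\[\|\rho(x)\|_M \leq C\,\|x\|_1 \quad \text{for all } x\in N,\]
which is the desired estimate.

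I do not expect a serious obstacle here: the only conceptual ingredient is that projectivity of $P$ forces the sequence to split, after which the bound is immediate from the already-established Lemma~\ref{lem:ProjBounded}. The only point worth being careful about is confirming that the hypotheses of Lemma~\ref{lem:ProjBounded} are indeed met, in particular that $N$ is finitely generated (guaranteed by ``based'' in the sense of Definition~\ref{def:based}) and that its $\ell_1$-norm is genuinely a filling norm (Remark~\ref{rem:l1}); with these noted, the argument is short.
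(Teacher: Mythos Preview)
Your construction of the retraction via the splitting is fine, but there is a genuine gap in the second step: you assume that $N$ is finitely generated, and this is not part of the hypotheses. Note that throughout the paper the phrases ``finitely generated'' and ``based'' are listed as separate conditions (see Lemma~\ref{EquivFree}, Definition~\ref{def:filling-norm}, Remark~\ref{rem:l1}); ``based'' only records that a free $\ZG$-basis has been fixed so that the $\ell_1$-norm is defined. More to the point, in the application of this lemma in the proof of Theorem~\ref{Main} the middle term is $Z_n(\widetilde Y)$ viewed as a $\ZH$-module, and when $[G:H]=\infty$ this is a free $\ZH$-module of \emph{infinite} rank. So the lemma must cover that case, and your appeal to Lemma~\ref{lem:ProjBounded} for $\rho\colon N\to M$ is illegitimate: that lemma requires the domain to be finitely generated and to carry a filling norm, whereas here $N$ only carries an $\ell_1$-norm and need not be finitely generated.

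The paper's proof supplies exactly the missing idea. Using that $M$ is finitely generated, one writes $N\cong I\oplus Q$ with $I$ a finitely generated free summand containing $\iota(M)$, and then \emph{modifies} the retraction by setting $\rho|_Q=0$ while keeping $\rho|_I$ equal to the splitting retraction. For any $x=y+q$ with $y\in I$, $q\in Q$, one has $\rho(x)=\rho(y)$ and $\|y\|_1\le\|x\|_1$; now Lemma~\ref{lem:ProjBounded} applies to the finitely generated domain $I$ and gives $\|\rho(x)\|_M=\|\rho(y)\|_M\le C\|y\|_1\le C\|x\|_1$. This reduction to a finitely generated free summand is the substantive step your argument is missing.
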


\begin{proof} Since $P$ is projective there is a retraction $\rho'$ for $\iota$. Since $M$ is finitely generated, 
 $N$ is isomorphic to a product $I\oplus Q$ of free modules where $I$ is finitely generated and contains the image of $M$. Define $\rho : N \rightarrow M$ by $\rho |_{I} = \rho' |_{I}$ and $\rho |_{Q} = 0$. Then $\rho$ is a retraction for $\iota$ with support contained in $I$.

Each $x \in N$ has a unique decomposition $x = y + q$ where $y \in I, \ q \in Q$ such that $\rho(x) = \rho(y)$ and $\| y \|_1 \leq \| x \|_1$. Apply Lemma~\ref{lem:ProjBounded} to the restriction $\rho \colon I \rightarrow M$ to obtain $C > 0$ such that $$ \| \rho(x) \|_M = \| \rho(y) \|_M \leq C \| y \|_1 \leq C \| x \|_1. $$\qedhere
\end{proof}

\section{Homological Filling Functions of Groups}\label{sec:fvg}

In this section, given a group $G$  of type $FP_{n+1}$, where $n \geq 1$, we define the group invariant $FV_G^{n+1}$.  In the first part of the section we provide an algebraic definition of $FV_G^{n+1}$ and prove that it is well defined. This algebraic approach, while naturally inspired by the topological approach, provides a convenient algebraic framework suitable for some of the arguments in this paper. This algebraic approach has been also explored in~\cite{JOR13}.  In the second part, we recall the topological approach to $FV_G^{n+	1}$ and show that the topological and algebraic approaches are equivalent for finitely presented groups of type $FP_{n+1}$. The final subsection discusses why $FV_G^{n+1}(k)$ is a finite number.

\subsection{Algebraic Definition of $FV_G^{n+1}$}\label{FVAlgebraic}

\begin{definition}[Linearly Equivalent Functions]\label{def:lin-equiv} Let $f$ and $g$ be functions from $\mathbb N$ to $\mathbb N$. Define $f \preceq g$ if there exists $C > 0$ such that for all $n \in \mathbb N$ \[f(n) \leq C g( Cn + C) + Cn + C.\] The functions $f$ and $g$ are \emph{linearly equivalent},  $f \sim g$,  if both $f \preceq g$ and $g \preceq f$ hold.    This is an equivalence relation and the equivalence class containing a function $f$ is called the \emph{linear equivalence class of $f$}. 
\end{definition}

\begin{definition}[$FP_{n}$  group]\cite{Brown}
A group $G$ is of \emph{type $FP_{n}$} if there is a resolution of $\ZG$--modules 
\[  P_{n} \overset{\partial_{n}} \longrightarrow P_{n-1} \overset{\partial_{n-1}} \longrightarrow \dots \overset{\partial_{2}} \longrightarrow P_1 \overset{\partial_{1}} \longrightarrow P_0 \longrightarrow \mathbb Z \rightarrow 0,\] 
such that for each $i\in \{0, 1 \ldots , n\}$ the module $P_i$ is a finitely generated  projective $\ZG$--module. In this case, such a resolution is called an \emph{$FP_{n}$--resolution}.
\end{definition}

\begin{definition}[Algebraic definition of $FV_G^{n+1}$] \label{def:algFVG} Let $G$ be a group of type $FP_{n+1}$. The algebraic \emph{$n^\text{th}$--filling function} is the (linear equivalence class of the) function \[FV_{G}^{n+1}\colon \N \to \N\] defined as follows. 
Let 
\[ P_{n+1} \overset{\partial_{n+1}} \longrightarrow P_n \overset{\partial_{n}} \longrightarrow \dots \overset{\partial_{2}} \longrightarrow P_1 \overset{\partial_{1}} \longrightarrow P_0 \longrightarrow \mathbb Z \rightarrow 0,\] 
 be a resolution of $\ZG$--modules for $\mathbb Z$ of type $FP_{n+1}$. Choose filling norms for  $P_n$ and $P_{n+1}$,  denoted by $\|\cdot \|_{P_n}$ and $\|\cdot \|_{P_{n+1}}$ respectively. 
 Then
\[FV_{G}^{n+1}(k) = \max \left \{ \ \| \gamma \|_{\partial_{n+1}} \colon \gamma \in \ker(\partial_n), \ \| \gamma \|_{P_n} \leq k \ \right \},\]
where 
\[ \| \gamma \|_{\partial_{n+1}} = \min \left\{  \| \mu\|_{P_{n+1}} \colon \mu \in P_{n+1},\  \partial_{n+1} (\mu) =\gamma \right\} .\]
\end{definition}

\begin{remark}[Finiteness of $FV_G^{n+1}$]\label{rem:FVGfinite} It is not immediately clear that the maximum in Definition~\ref{def:algFVG} is a finite number.  In Section~\ref{sec:FVGfinite} we recall some results from the literature which, under the assumption $G$ is finitely presented, imply that $FV_G^{n+1}$ is a finite valued function for $n=1$ and $n \geq 3$. The authors are not aware of a proof for the case $n=2$. 

For $n=2$, all results in this paper regarding $FV_G^3$ hold under the following natural modifications. First, work with the standard extensions of addition, multiplication, and order, of the positive integers $\mathbb N$ to $\mathbb N \cup \{ \infty \}$. Definition~\ref{def:lin-equiv} is extended to functions $\mathbb N \rightarrow \mathbb N \cup \{ \infty \}$, but we emphasize that the constant $C$ remains a finite positive integer. In Definition~\ref{def:algFVG} the function $FV^3_G$ is defined as an $\mathbb N \rightarrow \mathbb N \cup \{ \infty \}$ function. We observe that no argument in this paper relies on  $FV_G^{n+1}(k)$ being finite. 
\end{remark}

\begin{theorem}[$FV_G^{n+1}$ is a Well-defined Group Invariant]\label{AlgebraicDef}  Let $G$ be a group of type $FP_{n+1}$. Then the algebraic $n^\text{th}$--filling function $FV_G^{n+1}$ of $G$ is well defined up to linear equivalence.
\end{theorem}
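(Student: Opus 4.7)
The plan is to show the definition is invariant (up to $\sim$) under two choices: the choice of filling norms on $P_n$ and $P_{n+1}$, and the choice of $FP_{n+1}$-resolution.

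For the first choice, Lemma~\ref{EquivStd} asserts that any two filling norms on a finitely generated $\ZG$-module are linearly equivalent. Hence if $\|\cdot\|_{P_n}, \|\cdot\|_{P_{n+1}}$ are replaced by other filling norms, all quantities $\|\gamma\|_{P_n}$ and $\|\mu\|_{P_{n+1}}$ (and therefore $\|\gamma\|_{\partial_{n+1}}$) change by a bounded multiplicative factor, and the resulting function is linearly equivalent to the original one. This is the easy part.

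The substantive part is invariance under choice of resolution. Let $(P_\bullet, \partial_\bullet)$ and $(P'_\bullet, \partial'_\bullet)$ be two $FP_{n+1}$-resolutions and let $FV$ and $FV'$ be the corresponding filling functions. By the fundamental lemma of homological algebra, since both resolutions are projective and resolve $\mathbb{Z}$, there exist chain maps $\varphi_\bullet\colon P_\bullet \to P'_\bullet$ and $\psi_\bullet\colon P'_\bullet \to P_\bullet$ lifting $\mathrm{id}_{\mathbb{Z}}$, together with a chain homotopy $h_\bullet\colon P_\bullet \to P_{\bullet+1}$ satisfying $\psi_\bullet\circ \varphi_\bullet - \mathrm{id} = \partial_{\bullet+1} h_\bullet + h_{\bullet-1}\partial_\bullet$. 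I would then take $\gamma \in \ker(\partial_n)$ with $\|\gamma\|_{P_n}\leq k$ and use the following three-step transfer. First, apply $\varphi_n$ to get $\varphi_n(\gamma)\in \ker(\partial'_n)$, whose $P'_n$-norm is at most $C_1 k$ by Lemma~\ref{lem:ProjBounded} (the domain $P_n$ is projective). Second, by definition of $FV'$, choose $\mu'\in P'_{n+1}$ with $\partial'_{n+1}(\mu')=\varphi_n(\gamma)$ and $\|\mu'\|_{P'_{n+1}}\leq FV'(C_1 k)$. Third, set
\[
\mu := \psi_{n+1}(\mu') - h_n(\gamma) \in P_{n+1}.
\]
Using that $\psi_\bullet$ is a chain map and that $\partial_n(\gamma)=0$, the chain homotopy identity gives $\partial_{n+1}(\mu) = \psi_n(\varphi_n(\gamma)) - (\psi_n\varphi_n(\gamma) - \gamma) = \gamma$, so $\mu$ is a valid filling.

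Finally I would bound $\|\mu\|_{P_{n+1}}$. By Lemma~\ref{lem:ProjBounded} applied to $\psi_{n+1}\colon P'_{n+1}\to P_{n+1}$ (with projective domain $P'_{n+1}$) and to $h_n\colon P_n\to P_{n+1}$ (with projective domain $P_n$), there are constants $C_2, C_3$ with
\[
\|\mu\|_{P_{n+1}}\leq C_2 \|\mu'\|_{P'_{n+1}} + C_3\|\gamma\|_{P_n} \leq C_2\, FV'(C_1 k) + C_3 k.
\]
Hence $FV(k) \leq C_2\, FV'(C_1 k) + C_3 k$, which gives $FV \preceq FV'$ in the sense of Definition~\ref{def:lin-equiv}. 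Swapping the roles of the two resolutions yields the reverse inequality, so $FV \sim FV'$. The main technical point is the chain homotopy correction term $h_n(\gamma)$: one must verify that it can be controlled by the filling norm on $P_n$, and this is exactly what Lemma~\ref{lem:ProjBounded} provides because $P_n$ is projective and finitely generated, so all filling norms on its image and domain behave like $\ell_1$-norms up to linear equivalence.
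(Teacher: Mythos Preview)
Your proposal is correct and follows essentially the same route as the paper: both proofs use the chain homotopy equivalence between two projective resolutions, push a cycle across via one chain map, fill it there, pull the filler back via the other chain map, and correct by the homotopy term $h_n(\gamma)$, bounding everything with Lemma~\ref{lem:ProjBounded}. The only organizational difference is that you explicitly separate out invariance under the choice of filling norms on $P_n$ and $P_{n+1}$ as a preliminary step via Lemma~\ref{EquivStd}, whereas the paper absorbs this into the single comparison of two arbitrary resolutions-with-norms.
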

\begin{proof} Let $(F_\ast, \partial_\ast)$ and $(P_\ast, \delta_\ast)$ be a pair of resolutions of $\ZG$-modules of type $FP_{n+1}$ with choices of filling-norms for  their $n^\text{th}$ and  $(n+1)^\text{th}$ modules denoted by $\|\cdot \|_{F_n}$ and $\|\cdot \|_{F_{n+1}}$, and $\|\cdot \|_{P_n}$ and $\|\cdot \|_{P_{n+1}}$ respectively.  Let $FV^{n+1}_{F_\ast}$  and $FV^{n+1}_{P_{\ast}}$ be the induced functions according to Definition~\ref{def:algFVG}. By symmetry, it is enough to show that $FV^{n+1}_{F_\ast} \preceq FV^{n+1}_{P_{\ast}}$.

It is well known that any two projective resolutions of a $\ZG$-module are chain homotopy equivalent, see for example~\cite[pg.24, Thm 7.5]{Brown}\label{ChainHomotopy}, and hence the resolutions $F_\ast$ and $P_\ast$ are chain homotopy equivalent.  Therefore there exists chain maps $f_i : F_i \rightarrow P_i, \ g_i: P_i \rightarrow F_i$, and a map $h_i : F_i \rightarrow F_{i+1}$ such that \[ \partial_{i+1} \circ h_i + h_{i-1} \circ \partial_i = g_i \circ f_i - id.\]

Let $C$ denote the maximum of the constants for the maps $g_{n+1}$,  $h_n$, and $f_n$ and the chosen filling-norms  provided by Lemma~\ref{lem:ProjBounded}. We claim that for every $k\in \N$, 
\[  FV^{n+1}_{F_\ast}(k) \leq C\cdot FV^{n+1}_{P_\ast} ( Ck + C ) + Ck + C. \]
Fix $k$.  Let $\alpha \in \ker( \partial_n)$ be such that 
$\| \alpha \|_{F_n} \leq k$. 
Choose $\beta \in P_{n+1}$ such that $\delta_{n+1}(\beta) = f_n (\alpha )$ and $\|f_n(\alpha) \|_{\delta_{n+1}} = \| \beta \|_{P_{n+1}}$. 
By commutativity of the chain maps and the chain homotopy equivalence,
\begin{equation}\nonumber
\begin{split}
\partial_{n+1} \circ h_n ( \alpha ) + h_{n-1} \circ \partial_n (\alpha ) & = g_n \circ f_n ( \alpha ) - \alpha \\
& = g_n \circ \delta_{n+1}(\beta )  - \alpha \\
& = \partial_{n+1} \circ g_{n+1} (\beta ) - \alpha. 
\end{split}
\end{equation}
Since $\alpha \in \ker( \partial_n)$, we have that $h_{n-1} \circ \partial (\alpha) = 0$. Rearranging the above equation, we obtain 
\[ \alpha = \partial_{n+1} \circ g_{n+1} (\beta ) - \partial_{n+1} \circ h_n ( \alpha ) = \partial_{n+1} \left (g_{n+1} (\beta )- h_n ( \alpha ) \right).\] 
Hence $g_{n+1}(\beta) - h_n(\alpha)$ has boundary $\alpha$. Observe that
\begin{align} \nonumber
\| \alpha \|_{\partial_{n+1}} & \leq \left \| g_{n+1}(\beta) - h_n(\alpha) \right \|_{F_{n+1}} &&  \text{since $\partial_{n+1}( g_{n+1}(\beta) - h_n(\alpha)) = \alpha$} \\ \nonumber
& \leq\left \| g_{n+1}(\beta)  \right \|_{F_{n+1}} + \left \| h_n(\alpha ) \right \|_{F_{n+1}} &&  \text{by the triangle inequality} \\ \nonumber
& \leq C  \cdot \| \beta \|_{P_{n+1}} + C \cdot  \| \alpha \|_{F_{n}} && \text{by Lemma~\ref{lem:ProjBounded}}\\  \nonumber
& =  C \cdot \left \| f_n( \alpha ) \right \|_{\delta_{n+1}} + C \cdot \| \alpha \|_{F_{n}} && \text{by definition of $\beta$} \\  \nonumber
& \leq C \cdot FV^{n+1}_{P_\ast} ( \| f_n(\alpha ) \|_{P_n} ) + C \| \alpha \|_{F_n} && \text{by definition of $FV^{n+1}_{P_\ast}$} \\  \nonumber
& \leq C \cdot FV^{n+1}_{P_\ast} \left ( C \| \alpha \|_{F_n} \right) + C  \| \alpha \|_{F_n} && \text{by Lemma~\ref{lem:ProjBounded}}  \\  \nonumber
& \leq C\cdot FV^{n+1}_{P_\ast} ( Ck + C ) + Ck + C && \text{since $\| \alpha \|_{F_n} \leq k$.}  
\end{align}  
Since $\alpha$ was arbitrary, $FV_{F_\ast}^{n+1}(k) \leq C\cdot FV^{n+1}_{P_\ast} ( Ck + C ) + Ck + C$ for all $k \in \mathbb N$. This shows that $FV^{n+1}_{F_\ast} \preceq FV^{n+1}_{F'_{\ast}}$ completing the proof.
\end{proof}

\subsection{Topological Definition of $FV_G^{n+1}$}

For a cell complex $X$, the cellular chain group $C_i(X)$ is a free Abelian group with basis the collection of all $i$-cells of $X$. This natural basis induces an $\ell_1$-norm on $C_i(X)$ that we denote by $\|\cdot\|_1$. Recall that a complex $X$ is $n$-connected if its first $n$-homotopy groups are trivial.

\begin{definition}[$F_n$ group]
A group $G$ is of type $F_n$ if there is a $K(G, 1)$-complex with a finite $n$-skeleton, i.e., with only finitely many cells in dimensions $\leq n$.
\end{definition}

\begin{definition}[Topological Definition of $FV_G^{n+1}$]\cite{Fl98, Young}\label{def:FVG} Let $G$ be a group acting properly, cocompactly, by cellular automorphisms on an $n$--connected 
 cell complex $X$.   The topological \emph{$n^\text{th}$--filling function} of $G$ is the (linear equivalence class of the) function $FV_{G}^{n+1}\colon\N \to \N$ defined as 
\[FV^{n+1}_G (k) = \max \left \{ \ \| \gamma   \|_{\partial}  \colon \gamma \in Z_n(X), \ \| \gamma \|_1 \leq k \ \right \},\]
where 
\[ \| \gamma  \|_\partial  = \min \left \{ \ \| \mu \|_1 \colon \mu \in C_{n+1}(X), \ \partial ( \mu ) = \gamma \ \right \}.\] 
\end{definition}

J.~Fletcher and R.~Young have independently provided geometric proofs that the topological $n^{\text{th}}$--filling function $FV_G^{n+1}$ is well defined as an invariant of the group, see~\cite[Theorem 2.1]{Fl98} and~\cite[Lemma 1]{Young} respectively. In the work of Fletcher, the topological definition of $FV_G^{n+1}$ requires $X$ to be the universal cover of a $K(G,1)$, while Young's proof is in the more general context introduced above.

\begin{theorem}\label{ThmYoung}\cite{Young}
Let $G$ be a group admitting a proper and cocompact action by cellular automorphisms on an $n$--connected cell complex. Then the topological  $n^\text{th}$--filling invariant $FV_G^{n+1}$ of $G$ is well defined up to linear equivalence.
\end{theorem}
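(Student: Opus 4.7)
The strategy is to run the argument of Theorem~\ref{AlgebraicDef} on the geometric side. Given two proper cocompact cellular $G$-actions on $n$-connected complexes $X$ and $Y$, the plan is to produce bounded $G$-equivariant chain maps and homotopies relating $C_\ast(X)$ and $C_\ast(Y)$ in degrees $\leq n+1$, and then repeat the chain-level computation of Theorem~\ref{AlgebraicDef} verbatim with the cellular $\ell_1$-norms in place of the filling norms.

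The first step is to construct $G$-equivariant cellular maps $\varphi\colon X^{(n+1)} \to Y$ and $\psi\colon Y^{(n+1)} \to X$, together with $G$-equivariant cellular homotopies $\psi\circ\varphi \simeq \mathrm{id}$ and $\varphi\circ\psi \simeq \mathrm{id}$ on the respective $(n+1)$-skeleta. Using the $n$-connectedness of the target and a standard equivariant skeletal induction, one defines $\varphi$ first on a fundamental domain in $X^{(0)}$, extends equivariantly, and then extends cell by cell over $X^{(i)}$ for $i = 1,\dots,n+1$, at each stage using the vanishing of $\pi_{i-1}(Y)$ to fill in the attaching $(i-1)$-spheres. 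The chain homotopy is built analogously on $X^{(n+1)}\times[0,1]$, using that $\pi_i(X) = 0$ for $i\leq n$.

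Passing to cellular chains produces $G$-equivariant $\ZG$-module maps $\varphi_\ast\colon C_\ast(X)\to C_\ast(Y)$, $\psi_\ast\colon C_\ast(Y)\to C_\ast(X)$, and a chain homotopy $h_\ast$ with $\psi_\ast \varphi_\ast - \mathrm{id} = \partial h + h\partial$ in degrees $\leq n+1$. An exact analogue of Lemma~\ref{EquivFree}, applied with respect to the cellular $\ell_1$-norms, shows that these maps are bounded: cocompactness of the $G$-action on cells means that each $G$-orbit of cells is sent to a chain supported on finitely many orbits, so there is a uniform constant $C$ with $\|\varphi_\ast(c)\|_1 \leq C \|c\|_1$ for all $c \in C_i(X)$, and similarly for $\psi_\ast$ and $h_\ast$. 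With these bounded chain maps in hand, the calculation in the proof of Theorem~\ref{AlgebraicDef} transfers word-for-word: starting from an $n$-cycle $\alpha$ in $X$ of norm $\leq k$, one fills $\varphi_\ast(\alpha)$ in $Y$, transports the filling back by $\psi_\ast$, and corrects by the chain homotopy $h$ applied to $\alpha$, obtaining a filling of $\alpha$ in $X$ of norm $\leq C\cdot FV_{G,Y}^{n+1}(Ck+C) + Ck + C$. Swapping $X$ and $Y$ gives the symmetric inequality.

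The principal obstacle is the first step, namely carrying out the equivariant cellular extension when some cells carry nontrivial (but necessarily finite) stabilizers: filling the attaching sphere of a $G$-orbit of $i$-cells with stabilizer $G_\sigma$ requires choosing a $G_\sigma$-equivariant disk in $Y$, which is where one must use $n$-connectedness together with finiteness of $G_\sigma$ (via standard equivariant CW-approximation, or by averaging an arbitrary filling over the finite stabilizer, or by first equivariantly thickening $X$ and $Y$ to make the actions free without changing the cellular $\ell_1$-norms up to $\sim$). Once this geometric input is secured, the remainder of the argument is a direct transcription of the resolution-comparison computation from Section~\ref{FVAlgebraic} into the cellular setting.
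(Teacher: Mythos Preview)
The paper does not actually prove Theorem~\ref{ThmYoung}; it is quoted as a result of Young~\cite{Young} (with Fletcher~\cite{Fl98} cited for the special case where $X$ is the universal cover of a $K(G,1)$). The paper's own contribution to well-definedness is the purely algebraic Theorem~\ref{AlgebraicDef}, and then Proposition~\ref{prop:Fn1} observes that for a group of type $F_{n+1}$ the cellular chain complex of the universal cover of a finite-skeleton $K(G,1)$ is itself an $FP_{n+1}$ resolution, so the topological invariant is an instance of the algebraic one. Thus the paper never needs, and never supplies, an independent geometric proof in the generality of arbitrary proper cocompact actions.

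Your sketch is in the right spirit and is essentially Young's approach: build (equivariant) chain maps and chain homotopies between $C_\ast(X)$ and $C_\ast(Y)$ in degrees $\leq n+1$, observe they are bounded because there are only finitely many orbits of cells, and rerun the estimate of Theorem~\ref{AlgebraicDef}. Two comments on the execution. First, the ``averaging a filling over the finite stabilizer'' fix does not literally make sense for maps into a CW complex; equivariant obstruction theory or passing to a free model is what actually does the work, and you should be explicit about which route you take. Second, and more economically, you can sidestep the topological equivariance issue entirely by working at the chain level from the start: since $X$ and $Y$ are $n$-connected, the chain complexes $C_\ast(X)$ and $C_\ast(Y)$ are exact in degrees $1,\dots,n$, and one can build the required $\ZG$-chain maps and homotopies by induction on degree using only this exactness (lifting one orbit representative at a time), without ever needing the modules to be projective or the maps to come from honest equivariant cellular maps. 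That is closer to how the paper itself thinks about the problem, and it removes the stabilizer obstacle you flagged.
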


 Even in the topological definition, it is not trivial that $FV_G^{n+1}$ is a finite valued function and Remark~\ref{rem:FVGfinite} also applies in this case. For the rest of the section, we show that the topological and algebraic approaches to $FV_G^{n+1}$ are equivalent for finitely presented groups of type $FP_{n+1}$.

\begin{proposition}\label{prop:Fn1}
Let $n \geq 1$ and let $G$ be a group of type $F_{n+1}$. Then $G$ is of type $FP_{n+1}$ and the algebraic and topological $n^\text{th}$--filling functions of $G$ are linearly equivalent.
\end{proposition}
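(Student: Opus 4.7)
The plan is to derive both conclusions at once by extracting an explicit $FP_{n+1}$-resolution from the topological data and observing that, with the natural choices of filling norms, the algebraic and topological definitions become literally the same expression. First, since $G$ is of type $F_{n+1}$, fix a $K(G,1)$-complex $Y$ with finite $(n+1)$-skeleton, let $\widetilde Y$ be its universal cover, and set $X=\widetilde Y^{(n+1)}$. Cellular approximation applied to the contractible complex $\widetilde Y$ gives $\pi_k(X)=\pi_k(\widetilde Y)=0$ for all $k\leq n$, so $X$ is $n$-connected; the deck action of $G$ on $X$ is free, proper, cocompact, and cellular, so $X$ is admissible for Definition~\ref{def:FVG}.

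Next, I would run the algebraic side in parallel. The cellular chain complex of $\widetilde Y$ is an exact sequence of $\ZG$-modules
\[\cdots \to C_{n+1}(\widetilde Y) \overset{\partial_{n+1}}\to C_n(\widetilde Y) \overset{\partial_n}\to \cdots \to C_0(\widetilde Y) \overset{\varepsilon}\to \mathbb Z \to 0,\]
exactness coming from contractibility of $\widetilde Y$. Because $G$ acts freely and cellularly, a set of $G$-orbit representatives of the $i$-cells is a free $\ZG$-basis of $C_i(\widetilde Y)$; finiteness of $Y^{(n+1)}$ makes $C_i(\widetilde Y)$ finitely generated free for every $i\leq n+1$. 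This both witnesses $G$ being of type $FP_{n+1}$ and supplies a concrete $FP_{n+1}$-resolution to feed into Definition~\ref{def:algFVG}. On each $C_i(\widetilde Y)$ I would use the $\ell_1$-norm $\|\cdot\|_1$ coming from the $\ZG$-basis of orbit representatives, which is a filling norm by Remark~\ref{rem:l1}.

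The key, if minor, point is that this $\ZG$-basis $\ell_1$-norm coincides on the nose with the $\mathbb Z$-basis $\ell_1$-norm used in Definition~\ref{def:FVG}: since the $G$-action is free, the set $\{g\cdot\alpha : g\in G,\ \alpha \text{ an orbit representative}\}$ enumerates the $i$-cells of $\widetilde Y$ without repetition, so the two $\ell_1$-norms are the same function $C_i(\widetilde Y)\to\mathbb N$. Because $C_i(X)=C_i(\widetilde Y)$ for $i\leq n+1$, the expression
\[\max\bigl\{\|\gamma\|_{\partial_{n+1}} : \gamma\in\ker(\partial_n),\ \|\gamma\|_1\leq k\bigr\}\]
from Definition~\ref{def:algFVG} applied to the cellular resolution is literally identical to the corresponding expression in Definition~\ref{def:FVG} applied to $X$. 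Hence the algebraic and topological $FV_G^{n+1}$, computed from this one distinguished choice of data, are equal as integer-valued functions.

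To finish, I would invoke the two well-definedness results: Theorem~\ref{AlgebraicDef} says the algebraic $FV_G^{n+1}$ obtained from any other $FP_{n+1}$-resolution with any other filling norms is $\sim$-equivalent to the one computed above, and Theorem~\ref{ThmYoung} says the topological $FV_G^{n+1}$ obtained from any other admissible $G$-action on an $n$-connected complex is $\sim$-equivalent to the one computed above. Concatenating the two equivalences through the common cellular model yields $FV_{G,\text{alg}}^{n+1}\sim FV_{G,\text{top}}^{n+1}$. The only non-bookkeeping step is verifying $n$-connectedness of $X=\widetilde Y^{(n+1)}$ and exactness of the cellular chain complex up to degree $n$, both of which are immediate from contractibility of $\widetilde Y$ via cellular approximation; everything else amounts to matching definitions.
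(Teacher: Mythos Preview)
Your proof is correct and follows essentially the same approach as the paper: build the $FP_{n+1}$-resolution from the cellular chain complex of the universal cover of a $K(G,1)$ with finite $(n+1)$-skeleton, observe that the topological filling function is then a particular instance of the algebraic one (since the cell-basis $\ell_1$-norm is the induced $\ZG$-basis $\ell_1$-norm, hence a filling norm), and conclude via Theorems~\ref{AlgebraicDef} and~\ref{ThmYoung}. You are a bit more explicit than the paper about restricting to the $(n+1)$-skeleton to ensure cocompactness for Definition~\ref{def:FVG}, but this is bookkeeping rather than a different route.
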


\begin{proof}
Let $X$ be a $K(G,1)$ with finite $(n+1)$-skeleton. The $G$-action on the universal cover $\widetilde X$ of $X$ induces the structure of a $\ZG$-module to the group of cellular chains  $C_i(\widetilde X)$ and each boundary map $\partial_i$ is a morphism of $\ZG$-modules.  Since the $G$-action on $\widetilde X$ is cellular and free, each $C_i(\widetilde X)$ is a free $\ZG$-module with basis any collection of representatives of the $G$-orbits of $i$-cells.  Since the action is cocompact on the $(n+1)$--skeleton, each $C_i(\widetilde X)$ is a finitely generated free $\ZG$-module for $i \in \{0, 1, \dots , n+1 \}$. Since $\widetilde X$ is a contractible space, all its homology groups are trivial and therefore we have a resolution of $\ZG$-modules
\[ \cdots \longrightarrow  C_{n+1}(X) \overset{\partial_{n+1}} \longrightarrow C_n(X) \overset{\partial_{n}} \longrightarrow \dots \overset{\partial_{2}} \longrightarrow C_1(X) \overset{\partial_{1}} \longrightarrow C_0(X) \longrightarrow \mathbb Z \rightarrow 0,\] 
of type $FP_{n+1}$.  Under our assumptions, the induced topological  $n^\text{th}$--filling function of $G$ is a particular instance of an algebraic $n^\text{th}$--filling function of $G$. The conclusion then follows from Theorems~\ref{AlgebraicDef} and~\ref{ThmYoung}.
\end{proof}

\begin{proposition}\cite[pg 205, proof of Thm. 7.1]{Brown}\label{prop:Fn11}  Let $G$ be finitely presented and of type $FP_n$ where $n \geq 2$. Then $G$ is of type $F_n$.
\end{proposition}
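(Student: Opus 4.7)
The plan is to proceed by induction on $n$, constructing a $K(G,1)$ skeleton-by-skeleton. The base case $n=2$ is immediate: a finite presentation of $G$ produces a finite presentation $2$-complex $Y_2$ with $\pi_1(Y_2) = G$, and attaching cells of dimensions $\geq 3$ to kill higher homotopy yields a $K(G,1)$ whose $2$-skeleton is the finite complex $Y_2$.

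For the inductive step ($n \geq 3$), I would apply the inductive hypothesis to $G$ (still finitely presented and $FP_{n-1}$) to obtain a $K(G,1)$ whose $(n-1)$-skeleton $Y$ is finite. Its universal cover $\widetilde Y$ is $(n-2)$-connected, so by the Hurewicz theorem there is a $\ZG$-module isomorphism $\pi_{n-1}(Y)\cong \pi_{n-1}(\widetilde Y)\cong H_{n-1}(\widetilde Y)$. If I can show $H_{n-1}(\widetilde Y)$ is finitely generated as a $\ZG$-module, then I can pick finitely many generators, represent them by maps $S^{n-1}\to Y$, and attach finitely many $n$-cells along these maps to obtain a finite $n$-dimensional complex $Y'$ with $\pi_1(Y')=G$ and $\pi_i(Y')=0$ for $2\leq i\leq n-1$. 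Extending $Y'$ to a $K(G,1)$ by attaching cells in dimensions $\geq n+1$ does not alter its $n$-skeleton, so the resulting $K(G,1)$ has finite $n$-skeleton $Y'$, witnessing that $G$ is of type $F_n$.

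The crux of the argument is therefore the finite generation of $H_{n-1}(\widetilde Y)$ as a $\ZG$-module, and this is where the $FP_n$ hypothesis is essential. The cellular chain complex of $\widetilde Y$ furnishes an exact sequence
\[ 0 \longrightarrow H_{n-1}(\widetilde Y) \longrightarrow C_{n-1}(\widetilde Y) \longrightarrow \cdots \longrightarrow C_0(\widetilde Y) \longrightarrow \Z \longrightarrow 0, \]
a partial resolution of $\Z$ of length $n-1$ whose middle terms are finitely generated free $\ZG$-modules (each $C_i(\widetilde Y)$ is free on the finitely many $G$-orbits of $i$-cells, since $i\leq n-1$ and $Y$ is finite). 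Appealing to the standard generalized Schanuel lemma (as in Brown VIII.4.3), the $FP_n$ hypothesis forces the top kernel of any such partial resolution by finitely generated projectives of length $\leq n-1$ to itself be finitely generated, so $H_{n-1}(\widetilde Y)$ is finitely generated as required.

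The main obstacle, and essentially the only nontrivial point, is the Schanuel-type comparison above: it is what converts the algebraic finiteness input ($FP_n$) into the topological finiteness output needed to bound the number of $n$-cells that have to be attached. Everything else is routine obstruction theory for building and extending $K(G,1)$'s.
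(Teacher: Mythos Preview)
Your proof is correct and is precisely the standard argument from Brown that the paper cites (the paper itself gives no proof beyond the reference). The inductive construction via Hurewicz together with the Schanuel-type comparison to extract finite generation of $H_{n-1}(\widetilde Y)$ from the $FP_n$ hypothesis is exactly the content of the cited passage.
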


Propositions~\ref{prop:Fn1} and~\ref{prop:Fn11}  imply the following statement.

\begin{corollary}\label{EquivGeomAlg2} Let $G$ be a finitely presented group of type $FP_{n+1}$. Then the topological and algebraic definitions of $FV_G^{n+1}$ are equivalent.
\end{corollary}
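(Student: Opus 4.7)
The plan is to chain together the two propositions stated immediately before the corollary; no further ingredients are needed. First I would invoke Proposition~\ref{prop:Fn11} with its index set equal to $n+1$: since the standing assumption of this section is $n \geq 1$, we have $n+1 \geq 2$, so the hypothesis ``$n \geq 2$'' of that proposition is met. Combined with the given hypotheses that $G$ is finitely presented and of type $FP_{n+1}$, Proposition~\ref{prop:Fn11} yields that $G$ is of type $F_{n+1}$.

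With type $F_{n+1}$ now secured, the hypotheses of Proposition~\ref{prop:Fn1} are in place, and I would apply it to conclude that the algebraic and topological $n^\text{th}$--filling functions of $G$ are linearly equivalent. This is exactly the content of the corollary.

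The only delicate point, and the one I would emphasize in writing, is a bookkeeping matter of indices: one must verify that the ``$n \geq 2$'' appearing in Proposition~\ref{prop:Fn11} is to be applied to $n+1$ rather than to $n$, and that the standing convention $n \geq 1$ in this section makes that substitution legal. There is no additional mathematical obstacle, since the substantive work has already been performed in the two preceding propositions: Proposition~\ref{prop:Fn1} identifies the cellular chain complex of the universal cover of a $K(G,1)$ with finite $(n+1)$--skeleton as an $FP_{n+1}$--resolution whose induced algebraic filling function coincides with the topological one, and Proposition~\ref{prop:Fn11} is the classical upgrade from $FP_n$ to $F_n$ in the presence of a finite presentation, whose necessity for the corollary explains why ``finitely presented'' is listed among its hypotheses.
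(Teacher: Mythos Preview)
Your proposal is correct and follows exactly the paper's approach: the paper simply states that the corollary is implied by Propositions~\ref{prop:Fn1} and~\ref{prop:Fn11}, and your proof spells out precisely this chain of implications, including the index shift needed to apply Proposition~\ref{prop:Fn11} at level $n+1$.
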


\subsection{Finiteness of $FV_G^{n+1}(k)$}\label{sec:FVGfinite} Let $G$ be a finitely presented group of type $FP_{n+1}$, or equivalently assume that $G$ is of type $F_{n+1}$; see Proposition~\ref{prop:Fn11}. We will sketch why $FV_G^{n+1}$ is a finite valued function for $n=1$ and $n \geq 3$.

\subsubsection{Case $n=1$} Finiteness of $FV_G^2$ follows from that of the Dehn function $\delta_G$. We summarize the argument from Gersten's article~\cite[Prop $2.4$]{Ge99}. Let $X$ be a $K(G,1)$ with finite $2$--skeleton and let $z \in Z_1 \left ( \widetilde X \right )$ be a $1$--cycle with $\| \gamma \|_1 \leq k$. Then $z = z_1 + \dots z_m$ for some $m \leq k$ where each $z_i$ is the $1$--cycle induced by a simple edge circuit $\gamma_i$ in $\widetilde X$ and $\displaystyle \sum_{i=1}^{m} \ell(\gamma_i) = \| z \|_1$. Then $$\| z \|_{\partial_2} \leq \displaystyle \sum_{i=1}^{m} \Area(\gamma_i) \leq \displaystyle \sum_{i=1}^{m} \delta_G \left ( \ell(\gamma_i) \right ) \leq k \cdot \delta_G(k) < \infty.$$ 

\subsubsection{Case $n \geq 3$} A group $G$ of type $F_{n+1}$ has a well defined invariant called the $n^{th}$--homotopical filling function $\delta_G^n \colon \mathbb N \rightarrow \mathbb N$. There are multiple approaches to define $\delta_G^n$, we sketch the approach found in~\cite{ABDY, Snowflake}. Roughly speaking, if $X$ is a $K(G,1)$ with finite $(n+1)$--skeleton, then $\delta_G^n(k)$ measures the number of $(n+1)$--balls required to fill a sphere $S^n \to \widetilde X$ comprised of at most $k$ $n$--balls. Here the maps $f \colon S^n \to \widetilde X$ and fillings $\tilde f \colon D^{n+1} \to \widetilde X$ are required to be in a particular class of maps called \emph{admissible maps}. This allows one to define the volumes, $vol(f)$ and $vol(\tilde f)$, as the number of $n$-balls and $(n+1)$--balls of $S^n$ and $D^{n+1}$ respectively, mapping homeomorphically to open cells  of $\widetilde X$. The \emph{filling volume} of $f$ is given by $$\FVol(f) = \sup \{ \ vol( \tilde f ) \ | \ \tilde f \colon D^{n+1} \rightarrow \widetilde X, \ \tilde f|_{\partial D^{n+1}} = f \ \}$$ and $\delta_G^n$ by $$\delta_G^n(k) = \max \{ \ \FVol(f) \ | f \colon S^n \rightarrow \widetilde X, \ vol(f) \leq k \ \}.$$ Alonso et al. use higher homotopy groups as $\pi_1(X)$--modules to provide a more algebraic approach to $\delta_G^n$, in particular they show that $\delta_G^n$ is a finite valued function~\cite[Corollary $1$]{AWP}. It is observed in \cite[Remark $2.4(2)$]{Snowflake} that Alonso's approach and the approach described above are equivalent. 

The finiteness of $FV_G^{n+1}$ then follows from the inequality $$FV_G^{n+1} \preceq \delta_G^n$$ which holds for all $n \geq 3$. We outline the argument for this inequality described in the introduction of \cite{ABDY}. Let $X$ be a $K(G,1)$ with finite $(n+1)$--skeleton and let $\gamma \in Z_n \left ( \widetilde X \right )$ with $\| \gamma \|_1 \leq k$. Using the Hurewicz Theorem, one can show (see~\cite{Gromov83, White}) that $\gamma$ is the image of the fundamental class of an $n$--sphere for a map $f \colon S_n \rightarrow \widetilde X$ such that $vol(f) = \| \gamma \|_1$. If $\tilde f \colon D^{n+1} \rightarrow \widetilde X$ is an extension of $f$ to the $(n+1)$--ball $D^{n+1}$, then the image of the fundamental class of $D^{n+1}$ is an $(n+1)$--chain $\mu$ with $\partial(\mu) = \gamma$ and $vol \left ( \tilde f \right ) \geq \| \mu \|_1$. Therefore the filling volume $$\FVol(f) = \sup \{ \ vol( \tilde f ) \ | \ \tilde f \colon D^{n+1} \rightarrow \widetilde X, \ \tilde f|_{\partial D^{n+1}} = f \ \}$$ is greater than or equal to the filling norm $\| \gamma \|_{\partial_{n+1}}$. It follows that $FV_G^{n+1}(k) \leq \delta_G^n(k)$

\section{Main Result}\label{MainSection} As we will be working with cell complexes, all relevant computations in this section are understood to occur within cellular chain complexes. 

\begin{definition}[Stably free] A $\ZG$-module $P$ is   \emph{stably  free} if there exists finitely generated free $\ZG$ module $F$ such that $P \oplus F$ is free.
\end{definition}

\begin{lemma}[The Eilenberg Trick]\cite[pg.207]{Brown}\label{StdTrick} Let $G = \pi_1(X,x_0)$, where $X$ is a cell complex.  Then $X$ is a subcomplex of a complex $Y$ such that the inclusion $X\hookrightarrow Y$ is a homotopy equivalence, and 
the cellular $n$-cycles of the universal covers $\widetilde Y$ and $\widetilde X$ satisfy
$Z_n \left ( \widetilde Y \right ) \simeq Z_n \left ( \widetilde X \right ) \oplus \ZG$
as $\ZG$-modules.
\end{lemma}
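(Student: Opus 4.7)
The plan is to construct $Y$ from $X$ by attaching a pair of cells in dimensions $n$ and $n+1$ in such a way that the homotopy type is preserved while the cellular chain complex of the universal cover gains a free $\ZG$-summand in dimension $n$ consisting entirely of cycles. This is the topological incarnation of the algebraic Eilenberg swindle used to replace stably free modules by free ones.

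First I would attach an $n$-cell $e^n$ to $X$ via the constant attaching map at $x_0$, producing a complex $X'$ homotopy equivalent to $X \vee S^n$, and then attach an $(n+1)$-cell $e^{n+1}$ along the identity map of $S^n$ onto the newly created sphere to obtain $Y$. Since the pair $(e^n, e^{n+1})$ assembles into a copy of $D^{n+1}$ attached to $X$ at the single point $x_0$, the inclusion $X \hookrightarrow Y$ admits an obvious deformation retraction that collapses $D^{n+1}$ to $x_0$, hence is a homotopy equivalence.

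Next I would analyze $C_\ast(\widetilde Y)$. The preimage of each new cell in the universal cover consists of $|G|$ lifts indexed freely by the deck group, yielding decompositions $C_n(\widetilde Y) = C_n(\widetilde X) \oplus \ZG \cdot \tilde{e}^n$ and $C_{n+1}(\widetilde Y) = C_{n+1}(\widetilde X) \oplus \ZG \cdot \tilde{e}^{n+1}$ as $\ZG$-modules. The constant attaching map of $e^n$ at $x_0$ lifts to the constant map at $g\tilde{x}_0$ for each $g \in G$, so the cellular boundary of each new $n$-cell is zero; therefore every element of $\ZG \cdot \tilde{e}^n$ is an $n$-cycle. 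Since the boundary maps on the $C_\ast(\widetilde X)$ summand are unchanged, a chain $\gamma + \alpha \cdot \tilde{e}^n$ is an $n$-cycle in $\widetilde Y$ exactly when $\gamma \in Z_n(\widetilde X)$, producing the required isomorphism
\[ Z_n(\widetilde Y) \cong Z_n(\widetilde X) \oplus \ZG. \]

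There is no substantial obstacle; the main step requiring care, rather than ingenuity, is the boundary bookkeeping in the universal cover. Specifically, one must verify that each lift $g\cdot \tilde{e}^n$ is genuinely a cycle (which reduces to the fact that the constant attaching map lifts to a constant map), and that the attaching map of $e^{n+1}$ factors through the newly added $S^n$, so that $\partial_{n+1}(\tilde{e}^{n+1})$ lies entirely in the new summand $\ZG \cdot \tilde{e}^n$ and does not alter the $C_{n+1}(\widetilde X) \to C_n(\widetilde X)$ part of $\partial_{n+1}$. Once these two points are settled, the stated splitting of $Z_n(\widetilde Y)$ is immediate.
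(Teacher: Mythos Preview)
Your construction is exactly the one the paper uses: wedge on an $S^n$ at $x_0$ by attaching an $n$-cell with constant boundary, then cap it with an $(n+1)$-cell, and read off the $\ZG$-summand of $Z_n(\widetilde Y)$ from the lifts of the new $n$-cell. The paper's proof is in fact terser than yours and omits the bookkeeping you spell out about $\partial_n(\tilde e^n)=0$ and the block form of $\partial_{n+1}$, so there is nothing to add.
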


\begin{proof} Let $x_0$ be a $0$-cell of $X$, and glue an $n$-cell $D^n$ to $(X,x_0)$ by mapping its boundary to $x_0$. The resulting space is the wedge sum of $X$ and an $n$-sphere $S^n$. To obtain $Y$,  attach an $(n+1)$-cell $D^{n+1}$ by the attaching map that identifies $\partial D^{n+1}$ with the $n$-sphere $S^n$. Then  $Z_n \left ( \widetilde Y \right ) \simeq Z_n \left ( \widetilde X \right ) \oplus \ZG$ where the $\ZG$ factor is generated by a lifting of the $n$-cell $D^n$ to $\widetilde Y$. It is clear that $X \hookrightarrow Y$ is a homotopy equivalence.
\end{proof}

\begin{lemma}[Schanuel's Lemma]\cite[pg.193, Lemma 4.4]{Brown} Let $$0 \rightarrow P_n \rightarrow P_{n-1} \rightarrow \dots \rightarrow P_0 \rightarrow M \rightarrow 0$$ and $$0 \rightarrow P_n' \rightarrow P_{n-1}' \rightarrow \dots \rightarrow P_0' \rightarrow M \rightarrow 0$$ be exact sequences of $R$--modules with $P_i$ and $P_i'$ projective for $i \leq n-1$. Then $$P_0 \oplus P_1' \oplus P_2 \oplus P_3' \oplus \dots \ \large \simeq \ P_0' \oplus P_1 \oplus P_2' \oplus P_3 \oplus \dots $$
\end{lemma}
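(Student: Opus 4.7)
The plan is to argue by induction on $n$. The base case $n=1$ is the classical form of Schanuel's Lemma, and the inductive step reduces to it by using an earlier instance of the lemma to identify the first kernels and then splicing the tails.

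For the base case $n=1$, I would use the standard pullback construction. Given the sequences
$$0 \to P_1 \to P_0 \xrightarrow{f} M \to 0, \qquad 0 \to P_1' \to P_0' \xrightarrow{f'} M \to 0,$$
form the pullback $Q=\{(x,x')\in P_0\oplus P_0' : f(x)=f'(x')\}$. The coordinate projections yield short exact sequences
$$0 \to P_1' \to Q \to P_0 \to 0, \qquad 0 \to P_1 \to Q \to P_0' \to 0,$$
both of which split because $P_0$ and $P_0'$ are projective. Hence $P_0\oplus P_1' \simeq Q \simeq P_0'\oplus P_1$, which is the $n=1$ case.

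For the inductive step, assume the conclusion holds for resolutions of length $n-1$. Let $K=\ker(P_0\to M)$ and $K'=\ker(P_0'\to M)$. The base case applied to $0\to K\to P_0\to M\to 0$ and $0\to K'\to P_0'\to M\to 0$ yields an isomorphism $K\oplus P_0'\simeq K'\oplus P_0$. Splice the tails of the original resolutions with identity summands to obtain the exact sequences
$$0 \to P_n \to \cdots \to P_2 \to P_1\oplus P_0' \to K\oplus P_0' \to 0,$$
$$0 \to P_n' \to \cdots \to P_2' \to P_1'\oplus P_0 \to K'\oplus P_0 \to 0,$$
whose terminal modules are isomorphic and which have projectives in the required degrees (direct sums of projectives being projective). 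Applying the inductive hypothesis to these length-$(n-1)$ resolutions and regrouping the alternating sums yields the desired isomorphism.

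The main obstacle I anticipate is not conceptual but bookkeeping: one must verify that the index parity of the alternating direct sum obtained from the inductive hypothesis, once the extra $P_0$ and $P_0'$ summands are tracked through the splicing, actually matches the pattern $P_0\oplus P_1'\oplus P_2\oplus P_3'\oplus\cdots$ versus $P_0'\oplus P_1\oplus P_2'\oplus P_3\oplus\cdots$ claimed in the statement. A careful accounting shows the additional $P_0'$ inserted on the left-hand resolution contributes exactly the $P_0'$ term needed on the right-hand side of the final isomorphism, and symmetrically for $P_0$; the internal projective terms $P_2,P_3',\ldots$ coming from the inductive hypothesis then line up correctly.
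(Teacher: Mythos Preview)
The paper does not actually prove this lemma; it is stated with a citation to Brown and used as a black box in the proof of Theorem~\ref{Main}. So there is no ``paper's own proof'' to compare against.

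That said, your argument is correct and is the standard one. The pullback construction handles the base case $n=1$ cleanly, and your splicing step is exactly right: setting $N=K\oplus P_0'\simeq K'\oplus P_0$ and relabeling $R_0=P_1\oplus P_0'$, $R_i=P_{i+1}$ (and similarly for the primed sequence) produces two length-$(n-1)$ resolutions of $N$ with projectives in degrees $\leq n-2$. The inductive hypothesis then gives
\[
(P_1\oplus P_0')\oplus P_2'\oplus P_3\oplus\cdots \;\simeq\; (P_1'\oplus P_0)\oplus P_2\oplus P_3'\oplus\cdots,
\]
which after regrouping is precisely the claimed isomorphism. Your concern about the bookkeeping is well placed but unfounded: the parity does line up, as the computation above shows.
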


We are now ready to prove our main result which is a generalization of~\cite[Thm C]{Ge92}. The proof is based on Gersten's proof of \cite[Thm 4.6]{Ge96} and is adjusted for higher dimensions:

\begin{theorem}\label{Main} Let $G$ be a group admitting a finite $(n+1)$-dimensional $K(G,1)$ and let $H \leq G$  be a subgroup of type $F_{n+1}$. Then
$FV_H^{n+1} \preceq FV_G^{n+1}.$
\end{theorem}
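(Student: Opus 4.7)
The plan is to adapt Gersten's argument from the $n=1$ case (cf.\ \cite[Thm~4.6]{Ge96}) to higher dimensions using the algebraic machinery of Section~\ref{sec:fillingnorm}. Fix a finite $(n+1)$--dimensional $K(G,1)$ complex $X$ and a $K(H,1)$ complex $Y$ with finite $(n+1)$--skeleton. By Corollary~\ref{EquivGeomAlg2}, both filling functions may be computed from the cellular chain complexes $C_*(\widetilde X)$ and $C_*(\widetilde Y)$ of the universal covers. The inclusion $H\hookrightarrow G$ is realized by an $H$--equivariant cellular lift $\widetilde Y\to\widetilde X$, inducing a $\ZH$--linear chain map $\phi_*\colon C_*(\widetilde Y)\to C_*(\widetilde X)$. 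The strategy is: given $\gamma\in Z_n(\widetilde Y)$ with $\|\gamma\|_1\leq k$, push $\gamma$ forward to $Z_n(\widetilde X)$, fill there using $FV_G^{n+1}$, and transport the filling back to $\widetilde Y$ while tracking norms.

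Restricting scalars, both $C_*(\widetilde X)$ and $C_*(\widetilde Y)$ are free $\ZH$--resolutions of $\mathbb{Z}$ (the first one generally not finitely generated over $\ZH$ when $[G:H]=\infty$). Hence they are chain homotopy equivalent over $\ZH$: there exist a chain map $\psi_*\colon C_*(\widetilde X)\to C_*(\widetilde Y)$ and homotopies $h_i$ with $\psi_n\phi_n-\mathrm{id}=\partial^Y_{n+1}h_n+h_{n-1}\partial^Y_n$. Mirroring the computation in the proof of Theorem~\ref{AlgebraicDef}, if $\mu'\in C_{n+1}(\widetilde X)$ satisfies $\partial^X_{n+1}\mu'=\phi_n(\gamma)$, then
\[\gamma \;=\; \partial^Y_{n+1}\bigl(\psi_{n+1}(\mu') - h_n(\gamma)\bigr),\]
so $\mu:=\psi_{n+1}(\mu')-h_n(\gamma)$ is a filling of $\gamma$ in $\widetilde Y$.

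Three norm estimates then need to be combined. The bound $\|\phi_n(\gamma)\|_1\leq C\|\gamma\|_1$ is elementary because $C_n(\widetilde Y)$ has a finite $\ZH$--basis and $\phi_n$ is determined by its values on that basis, so a filling $\mu'\in C_{n+1}(\widetilde X)$ can be chosen with $\|\mu'\|_1\leq FV_G^{n+1}(Ck)$. The bound $\|h_n(\gamma)\|_1\leq C\|\gamma\|_1$ follows from Lemma~\ref{EquivFree}, as $h_n$ is a $\ZH$--morphism between the finitely generated based free $\ZH$--modules $C_n(\widetilde Y)$ and $C_{n+1}(\widetilde Y)$.

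The main obstacle is the third estimate, $\|\psi_{n+1}(\mu')\|_1\leq C\|\mu'\|_1$. Lemma~\ref{EquivFree} does not apply directly because $C_{n+1}(\widetilde X)$, although finitely generated over $\ZG$, is typically not finitely generated over $\ZH$, so its image under a $\ZH$--morphism has no \emph{a priori} control. The work of Section~\ref{sec:fillingnorm} is tailored for exactly this situation. I would apply the Retraction Lemma (Lemma~\ref{RetractionLemma}) to a short exact sequence $0\to M\to N\to P\to 0$ of $\ZH$--modules, where $M$ is a finitely generated submodule carrying a filling norm (constructed from $C_{n+1}(\widetilde Y)$ via $\phi_{n+1}$ or from $Z_n(\widetilde Y)$), $N$ is a finitely generated based free $\ZH$--module, and $P$ is projective. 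Projectivity of $P$ would be secured by combining the length--$(n+1)$ free $\ZG$--resolution $C_*(\widetilde X)$ with Schanuel's Lemma, stabilizing if necessary via the Eilenberg Trick (Lemma~\ref{StdTrick}) to make the relevant quotient stably free. The bounded retraction supplied by Lemma~\ref{RetractionLemma} then yields a choice of $\psi_{n+1}$ satisfying the desired inequality. Assembling all three bounds gives $\|\mu\|_1\leq FV_G^{n+1}(Ck+C)+Ck+C$ for every $\gamma$ with $\|\gamma\|_1\leq k$, completing $FV_H^{n+1}\preceq FV_G^{n+1}$.
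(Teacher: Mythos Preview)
You correctly identify the obstacle (no control on $\psi_{n+1}$ since its domain is not finitely generated over $\ZH$) and the right tools (Retraction Lemma, Schanuel, Eilenberg trick). The gap is in the assembly: Lemma~\ref{RetractionLemma} does not produce a bounded \emph{chain map} $\psi_{n+1}$. If, after a mapping cylinder, the $H$--complex sits as a subcomplex of the $G$--complex, the natural short exact sequence at the $C_{n+1}$--level has the relative chains as quotient, and the bounded retraction supplied by Lemma~\ref{RetractionLemma} is simply the projection onto the $\ZH$--summand $C_{n+1}(\text{$H$--cover})$. That projection is bounded but does not commute with $\partial$, so the identity $\gamma=\partial^Y_{n+1}\bigl(\psi_{n+1}(\mu')-h_n(\gamma)\bigr)$ fails. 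Your chain--homotopy formula requires $\psi_{n+1}$ to be simultaneously bounded and a chain map, and the sketch does not explain how to obtain such a map. (Also, $N$ in Lemma~\ref{RetractionLemma} need not be finitely generated; in the intended application it is not.)

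The paper dispenses with the chain--homotopy framework and works one degree lower, directly with $Z_n$ --- essentially your second option ``from $Z_n(\widetilde Y)$''. Via the mapping cylinder of a cellular map realising $H\hookrightarrow G$, the $H$--complex becomes a genuine subcomplex of an ambient $G$--complex; Schanuel applied to the two length--$(n+1)$ resolutions shows $Z_n$ of the ambient cover is stably free over $\ZG$, and the Eilenberg trick upgrades this to free. The long exact sequence of the pair, using $\widetilde H_{n-1}(\text{$H$--cover})=0$, then yields
\[0\longrightarrow Z_n(\text{$H$--cover})\overset{\iota}{\longrightarrow}Z_n(\text{$G$--cover})\longrightarrow Z_n(\text{pair})\longrightarrow 0,\]
with the quotient stably free over $\ZH$ by a second use of Schanuel. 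Now Lemma~\ref{RetractionLemma} applies to \emph{this} sequence: equipping $Z_n(\text{$G$--cover})$ with the $\ell_1$--norm from a free $\ZG$--basis (itself a filling norm, giving a valid representative of $FV_G^{n+1}$) and $Z_n(\text{$H$--cover})$ with the filling norm induced by $\partial_{n+1}$, the bounded retraction $\rho$ gives $\|\gamma\|_{Z_n(\text{$H$--cover})}=\|\rho\iota(\gamma)\|\leq C\,\|\iota(\gamma)\|_{Z_n(\text{$G$--cover})}\leq C\cdot FV_G^{n+1}(k)$ for every $\gamma$ with $\|\gamma\|_1\leq k$. No map $\psi_{n+1}$, and no chain homotopy, is needed.
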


\begin{proof} Let $W$ be a finite $(n+1)$-dimensional $K(G,1)$.  Let $X$ be the $(n+1)$-skeleton of a $K(H,1)$. Since $H$ is of type $F_{n+1}$, we may assume that $X$ is a finite cell complex.  Then, after subdivisions, there exists a cellular map $f : X \rightarrow W$ inducing the inclusion $H \hookrightarrow G$ at the level of fundamental groups. Let $M_f$ be the mapping cylinder of $f$ and consider the  exact sequences of $\ZG$-modules 
\begin{equation}
  0 \to  Z_{n} \left ( \widetilde M_f \right ) \to   C_{n} \left ( \widetilde M_f \right ) \to \cdots \to  C_0 \left ( \widetilde M_f \right ) \to \mathbb Z \to 0
\end{equation}
and
\begin{equation}
  0 \to C_{n+1} \left ( \widetilde W \right ) \to  C_n \left ( \widetilde W \right )  \to  \cdots \to C_0 \left ( \widetilde W \right ) \to  \mathbb Z \to 0,
\end{equation}
where $\widetilde W$ and $\widetilde M_f$ denote the universal covers of $W$ and $M_f$ respectively. 

Applying Schanuel's lemma to the above sequences shows that the $\ZG-$module $Z_n \left ( \widetilde M_f \right )$ is finitely generated and stably   free. Let $Y$ be the space obtained by attaching a finite number of $(n+1)$--balls to the base point of $M_f$ as in Lemma \ref{StdTrick} such that $Z_n \left ( \widetilde Y \right )$ is finitely generated and free as a $\ZG$-module.  

From here on, we are only concerned with the inclusion map $X \to Y$ realizing the inclusion $H \to G$ at the level of fundamental groups with the property that $Z_n \left ( \widetilde Y \right )$ is finitely generated and free as a $\ZG$-module.   Since the inclusion $X \to Y$ is injective at the level of fundamental groups, any lifting $\widetilde X \to \widetilde Y$ is an embedding. Moreover, we can choose the lifting to be  equivariant with respect to the inclusion $H\to G$. Without loss of generality,  assume that $\widetilde X$ is an $H$-equivariant subcomplex of $\widetilde Y$. 

Since the ring $\ZG$ is free as a $\mathbb ZH$--module, it follows that $C_i \left ( \widetilde Y \right )$ is a free $\mathbb ZH$-module. Since $\widetilde X$ is an $H$-equivariant subcomplex of $\widetilde Y$,  the $\mathbb Z H-$module $C_i \left (\widetilde{X} \right )$ is a free factor of $C_i \left ( \widetilde Y \right )$.  Hence the quotient $C_i \left ( \widetilde{Y}^{(n)}  , \widetilde{X}^{(n)} \right ) =  C_i \left ( \widetilde Y \right ) / C_i \left ( \widetilde X \right )$ is a free $\Z H-$module. 

Restricting our attention to $n$-skeleta, the following short exact sequence of chain complexes of $\mathbb Z H$-modules arises 
\begin{equation} 0 \rightarrow C_{\ast} \left ( \widetilde{X}^{(n)} \right ) \rightarrow C_{\ast} \left ( \widetilde{Y}^{(n)} \right ) \rightarrow C_{\ast} \left ( \widetilde{Y}^{(n)} \ , \ \widetilde{X}^{(n)} \right )  \rightarrow 0.\end{equation}  
Consider the induced long exact homology sequence 
\begin{equation}\label{eq:HXY} 0 \rightarrow \widetilde H_n \left ( \widetilde{X}^{(n)} \right ) \rightarrow  \widetilde H_n \left ( \widetilde{Y}^{(n)} \right ) \rightarrow   \widetilde H_n\left ( \widetilde{Y}^{(n)}  , \widetilde{X}^{(n)} \right ) \rightarrow  \widetilde H_{n-1} \left ( \widetilde{X}^{(n)} \right ) \rightarrow  \cdots .\end{equation}
Since $X$ is the $(n+1)$-skeleton of an $K(H,1)$, the homology group $ \widetilde H_{n-1}(\widetilde{X}^{(n)})$ is trivial.  Now the exact sequence~\eqref{eq:HXY} can be truncated, obtaining the short exact sequence
\begin{equation}\label{SES}
0 \rightarrow Z_n \left ( \widetilde{X} \right ) \overset\iota\rightarrow Z_n \left ( \widetilde{Y} \right ) \rightarrow Z_n \left ( \widetilde{Y} \ , \ \widetilde{X} \right ) \rightarrow 0,
\end{equation}
where $\iota$ is induced by the inclusion $\widetilde X \subseteq \widetilde Y$. We claim that the short exact sequence~\eqref{SES} satisfies the three hypothesis of Lemma~\ref{RetractionLemma}.

First, since $X$ is a finite cell complex, $C_{n+1} \left ( \widetilde{X} \right )$ is finitely generated as a $\mathbb ZH$-module. Therefore $Z_n \left ( \widetilde{X} \right )$ is also finitely generated as a $\mathbb ZH$-module.

Second, the construction of $Y$ guarantees that $Z_n \left ( \widetilde Y \right )$ is a free $\ZG$--module, hence $Z_n \left ( \widetilde Y \right )$ is a free $\mathbb ZH$-module.

Third, we need to verify that  $Z_{n} \left ( \widetilde{Y},\widetilde{X} \right )$ is a projective $\Z H$-module; in fact we show that it is stably free. Indeed,  since $X^{(n)}$ and $Y^{(n)}$ are the $(n+1)$-skeletons of a $K(H,1)$ and a $K(G,1)$ respectively, the reduced homology groups  $ \widetilde H_{k}\left(\widetilde{X}^{(n)}\right)$  and   $ \widetilde H_{k}\left(\widetilde{Y}^{(n)}\right)$  are trivial for $1\leq k<n$. Then, considering the exact sequence~\eqref{eq:HXY}, we have that
\begin{equation}\label{eq:CYX}
  0 \to Z_{n} \left ( \widetilde{Y}^{(n)},\widetilde{X}^{(n)} \right ) \to C_n \left ( \widetilde{Y}^{(n)} , \widetilde{X}^{(n)} \right )   \to \cdots \to C_0 \left ( \widetilde{Y}^{(n)} , \widetilde{X}^{(n)} \right ) \to    0  
\end{equation}
is also exact.  Since all the $\ZH$-modules $C_i \left ( \widetilde{Y}^{(n)} , \widetilde{X}^{(n)} \right ) $ are free, and application of Schanuel's Lemma to~\eqref{eq:CYX} and a trivial resolution of $C_0 \left ( \widetilde{Y}^{(n)} , \widetilde{X}^{(n)} \right )$ shows that  
$ Z_{n} \left ( \widetilde{Y}^{(n)},\widetilde{X}^{(n)} \right )$ is a stably free $\Z H$-module.

 Thus we have shown that the short exact sequence~\eqref{SES} satisfies the three hypothesis of Lemma~\ref{RetractionLemma}. Before invoking this lemma and concluding the proof, we set up notation for the norms required to specify representatives of $FV_G^{n+1}$ and $FV_H^{n+1}$.

Let $\|\cdot\|_1$ denote the $\ell_1$-norm on $C_{i}(\widetilde Y)$ induced by the basis consisting on all $i$-cells of $\widetilde Y$.  Let $\|\cdot\|_{Z_n(\widetilde Y)}$ denote the $\ell_1$-norm on $Z_n  (\widetilde Y)$ induced by a free $\ZG$-basis; by definition this is also filling norm on $Z_n  (\widetilde Y)$. Then (a representative of) $FV_G^{n+1}$ is given by 
\begin{equation} FV_G^{n+1}(k)=\max\left\{ \| \gamma \|_{Z_n(\widetilde Y)}  \colon \gamma\in Z_1(\widetilde Y),\ \| \gamma \|_1\leq k  \right\}.\end{equation}
Since $C_{n+1}(\widetilde X) \subseteq C_{n+1}(\widetilde Y)$ is a free factor,   the $\ell_1$-norm on $C_{n+1}(\widetilde X)$ induced by the $(n+1)$--cells of $\widetilde X$ equals the restriction of $\|\cdot\|_1$ to $C_{n+1}(\widetilde X)$. Let $\|\cdot\|_{Z_n(\widetilde X)}$ denote the filling-norm on $Z_n(\widetilde X)$ as a $\ZH$-module induced by the boundary map $C_{n+1}(\widetilde X) \overset{\partial_{n+1}}\to Z_n(\widetilde X)$. Then 
\begin{equation}FV_H^{n+1}(k)=\max\left\{ \| \gamma \|_{Z_n(\widetilde X)}  \colon \gamma\in Z_1(\widetilde X),\ \| \gamma \|_1\leq k  \right\}.\end{equation}

By Lemma~\ref{RetractionLemma} applied to the short exact sequence~\eqref{SES}, there exists a constant $C_1>0$ and a morphism  of $\ZH$-modules $\rho\colon Z_n(\widetilde Y) \to Z_n(\widetilde X)$ such that 
\begin{equation}\label{eq:main}
\| \rho (\alpha) \|_{Z_n(\widetilde X)} \leq C_1 \cdot \|\alpha\|_{Z_n(\widetilde Y)},
\end{equation}
for every $\alpha \in Z_n(\widetilde Y)$, and $\rho \circ \imath$ is the identity on $Z_n(\widetilde X)$.  

Let $k\in \N$ and let $\gamma\in Z_n(\widetilde X)$ such that 
$\|\gamma\|_1\leq k$.  Then~\eqref{eq:main} implies  that
\begin{equation}
\| \gamma \|_{Z_n(\widetilde X)} = \| \rho  \circ \iota (\gamma) \|_{Z_n(\widetilde X)}   \leq C \cdot \| \iota ( \gamma ) \|_{Z_n(\widetilde Y)}    \leq C  \cdot FV_{G}^{n+1} (k). \end{equation} 
Since $\gamma$ was arbitrary, we have $FV^{n+1}_{H}(k) \leq C  \cdot FV_{G}^{n+1} (k)$.
\end{proof}

\begin{remark} 
The proof of Theorem~\ref{Main} does not apply to obtain that $FV_H^{m+1}  \preceq  FV_G^{m+1}$ for $m<n$. As mentioned in the introduction, that statement is false.  The argument breaks down since $Z_m(\widetilde M_f)$ is not projective if $m<n$. 
\end{remark}

 \bibliographystyle{plain}

\begin{thebibliography}{10}

\bibitem{ABDY}
Aaron Abrams, Noel Brady, Pallavi Dani, and Robert Young.
\newblock Homological and homotopical dehn functions are different.
\newblock {\em PNAS}, 110(48):19206--19212, 2013.

\bibitem{AWP}
J.~M. Alonso, X.~Wang, and S.~J. Pride.
\newblock Higher-dimensional isoperimetric (or {D}ehn) functions of groups.
\newblock {\em J. Group Theory}, 2(1):81--112, 1999.

\bibitem{BORS02}
J.-C. Birget, A.~Yu. Ol'shanskii, E.~Rips, and M.~V. Sapir.
\newblock Isoperimetric functions of groups and computational complexity of the
  word problem.
\newblock {\em Ann. of Math. (2)}, 156(2):467--518, 2002.

\bibitem{NonHyp}
Noel Brady.
\newblock Branched coverings of cubical complexes and subgroups of hyperbolic
  groups.
\newblock {\em J. London Math. Soc. (2)}, 60(2):461--480, 1999.

\bibitem{Snowflake}
Noel Brady, Martin~R. Bridson, Max Forester, and Krishnan Shankar.
\newblock Snowflake groups, {P}erron-{F}robenius eigenvalues and isoperimetric
  spectra.
\newblock {\em Geom. Topol.}, 13(1):141--187, 2009.

\bibitem{Brown}
Kenneth~S. Brown.
\newblock {\em Cohomology of groups}, volume~87 of {\em Graduate Texts in
  Mathematics}.
\newblock Springer-Verlag, New York, 1994.
\newblock Corrected reprint of the 1982 original.

\bibitem{EilenbergGanea}
Samuel Eilenberg and Tudor Ganea.
\newblock On the {L}usternik-{S}chnirelmann category of abstract groups.
\newblock {\em Ann. of Math. (2)}, 65:517--518, 1957.

\bibitem{WordProc}
David B.~A. Epstein, James~W. Cannon, Derek~F. Holt, Silvio V.~F. Levy,
  Michael~S. Paterson, and William~P. Thurston.
\newblock {\em Word processing in groups}.
\newblock Jones and Bartlett Publishers, Boston, MA, 1992.

\bibitem{Fl98}
Jeffrey~L. Fletcher.
\newblock {\em Homological Group Invariants}.
\newblock PhD thesis, 1998.

\bibitem{Ge99}
S.~M. Gersten.
\newblock Homological dehn functions and the word problem.
\newblock www.math.utah.edu/~sg/Papers/df9.pdf.

\bibitem{Ge96-2}
S.~M. Gersten.
\newblock A note on cohomological vanishing and the linear isoperimetric
  inequality.
\newblock http://www.math.utah.edu/~sg/Papers/van.pdf.

\bibitem{Ge92}
S.~M. Gersten.
\newblock Dehn functions and {$l_1$}-norms of finite presentations.
\newblock In {\em Algorithms and classification in combinatorial group theory
  ({B}erkeley, {CA}, 1989)}, volume~23 of {\em Math. Sci. Res. Inst. Publ.},
  pages 195--224. Springer, New York, 1992.

\bibitem{Ge96}
S.~M. Gersten.
\newblock Subgroups of word hyperbolic groups in dimension {$2$}.
\newblock {\em J. London Math. Soc. (2)}, 54(2):261--283, 1996.

\bibitem{GH90}
{\'E}.~Ghys and P.~de~la Harpe, editors.
\newblock {\em Sur les groupes hyperboliques d'apr\`es {M}ikhael {G}romov},
  volume~83 of {\em Progress in Mathematics}.
\newblock Birkh\"auser Boston, Inc., Boston, MA, 1990.
\newblock Papers from the Swiss Seminar on Hyperbolic Groups held in Bern,
  1988.

\bibitem{Gromov83}
Mikhael Gromov.
\newblock Filling {R}iemannian manifolds.
\newblock {\em J. Differential Geom.}, 18(1):1--147, 1983.

\bibitem{JOR13}
Ronghui Ji, Crichton Ogle, and Bobby Ramsey.
\newblock {$\mathcal B$}-bounded cohomology and applications.
\newblock {\em Internat. J. Algebra Comput.}, 23(1):147--204, 2013.

\bibitem{Lang}
Urs Lang.
\newblock Higher-dimensional linear isoperimetric inequalities in hyperbolic
  groups.
\newblock {\em Internat. Math. Res. Notices}, (13):709--717, 2000.

\bibitem{MP15}
Eduardo Mart{\'{\i}}nez-Pedroza.
\newblock A note on fine graphs and homological isoperimetric inequalities.
\newblock arXiv:1501.01259.

\bibitem{White}
Brian White.
\newblock Mappings that minimize area in their homotopy classes.
\newblock {\em J. Differential Geom.}, 20(2):433--446, 1984.

\bibitem{Young}
Robert Young.
\newblock Homological and homotopical higher-order filling functions.
\newblock {\em Groups Geom. Dyn.}, 5(3):683--690, 2011.

\bibitem{Young13}
Robert Young.
\newblock Filling inequalities for nilpotent groups through approximations.
\newblock {\em Groups Geom. Dyn.}, 7(4):977--1011, 2013.

\bibitem{YoungHeisenberg}
Robert Young.
\newblock High-dimensional fillings in heisenberg groups.
\newblock {\em The Journal of Geometric Analysis}, pages 1--21, 2015.

\end{thebibliography}

\end{document}